\newcommand{\Cc}{{\mathscr C}}
\newcommand{\Fc}{{\mathscr F}}
\newcommand{\RR}{{\mathbb R}}
\newcommand{\NN}{{\mathbb N}}
\newcommand{\ZZ}{{\mathbb Z}}
\newcommand{\CC}{{\mathbb C}}
\newcommand\cA{{\mathcal A}} 
\newcommand\cB{{\mathcal B}} 
\newcommand\cC{{\mathcal C}}
\newcommand\cD{{\mathcal D}}
\newcommand\cS{{\mathcal S}}
\newcommand\cF{{\mathcal F}}
\newcommand\cM{{\mathcal M}}
\newcommand\cN{{\mathcal N}}
\def\eps{\varepsilon}
\newcommand\vp{\varphi} 
\def\D{\partial}
\def\wh{\widehat}
\def\ud{\underline}
\def\ds{\displaystyle}
\renewcommand{\div}{{\rm div}}
\newcommand{\na}{{\nabla}}
\newtheorem{theo}{Theorem}[section]
\newtheorem{lem}[theo]{Lemma}
\newtheorem{defi}[theo]{Definition}
\newtheorem{ass}[theo]{Assumption}
\newtheorem{rem}[theo]{Remark}
\title{Asymptotic expansion of wave scattering in a periodic 2d-plane\footnote{The following article has been submitted to AIP Journal of mathematical physics. After publication if any, it would be found at \url{https://publishing.aip.org/resources/librarians/products/journals/}}
}
\author{Vincent Lescarret\thanks{vincent.lescarret@centralesupelec.fr}}
\affil{Université Paris Saclay,\\ Fédération de mathématiques de Centralesupelec, CNRS FR3487}
\begin{document}

\maketitle

\begin{abstract}
We give a counter part of Sommerfeld outging radiation condition for waves propagating in a 2d periodic medium under generical assumptions and provide a uniqueness theorem for outgoing solutions.
\end{abstract}

\section{Introduction}
Asymptotics of the outgoing Green function for the Helmholtz equation with periodic coefficients has been given in~\cite{MT} for frequencies lying in the first spectral band in any dimensions. We propose to extend the formula in the 2d case to any frequency except for a set of isolated frequencies. Part of this work reproduces the work~\cite{SAKM} which the author just became aware by the time of submission of this present work. Despite redundance of some ideas this work adresses many other points and partly lies on~\cite{G}.

We wish to solve the Helmholtz equation
\begin{equation}\label{e1}
\div(\alpha\na u)+k^2\beta u=f\quad \text{in}\ \RR^2
\end{equation}
where $f\in L^2(\RR^2)$ is compactly supported, $k$ real positive and $\alpha,\beta>0$. The coefficients $\alpha$ and $\beta$ are bounded functions, periodic with common period. Let $W=[0,2\pi]^2$ be a periodicity cell (Wigner-Seitz cell) and $B=[0,1]^2$ the fundamental periodicity cell of the reciprocical lattice (Brillouin zone).

We look for a formula for $u$ by the mean of the absorption principle as in~\cite{MT,Ra}. A general formula has been given in~\cite{Ra}, Theorem 3.31 expressing $u=u_1+u_2+r$ where $r\in L^2(\RR^2)$, $u_j\in L^2_{loc}$ for $j\in\{1,2\}$.
Then $u_1$ is a residu while $u_2$ is a principal Cauchy value. In Remark 3.33 of~\cite{Ra} the author says that $u_1$ is the leading term and that $u_2$ is a corrector in some $L^p$ space but Lemma 2.4 of~\cite{MT} shows that this is wrong.
Loosely speaking the term $u_2$ removes the terms in $u_1$ which correspond to ``incoming'' waves and thus $u_1+u_2$ only keeps ``outgoing'' waves (see~\cite{FJ15} for the idea in the case of a periodic waveguide). 
 
Our calculations closely follow those of~\cite{MT} (which are based on the method used in the homogeneous (non periodic) case as for instance in Melrose~\cite{Mel} paragraph 1.7) but with two differences. First instead of proving an analogue of Lemma 2.4 of~\cite{MT} we just use Cauchy residu formula and thus deal with contour complex deformation as in~\cite{Ho,G}. Then contrary to~\cite{MT} we consider any $k^2$ above the bottom of the essential spectrum of $P=-\frac{1}{\beta}\div(\alpha\na)$. In~\cite{MT}, $k^2$ lies on the first band and is close enough to the bottom of the essential spectrum of $P$ so that the level set on the first band is a single smooth cycle (loop). Here generally several bands meet the level $k^2$ and one requires a refined analysis of the geometry of the level set. Of course ideas are known for a long time in crystallography and in elasticity where level sets are called slowness surfaces (in 3d). See for instance~\cite{Wil1,BGJ}.

Before stating the main result let us start with a formal calculation and introduce the main notations.

Let $\eps>0$ and replace $k^2$ in~\eqref{e1} by $k_{\eps}^2:=k^2+i\eps$. Then applying the Bloch transform:
$$
\hat u(x,\ell)=\sum_{j\in\ZZ^2}u(x+2\pi j)e^{-i\ell\cdot (x+2\pi j)}
$$
to equation~\eqref{e1} and using the commutational property of the Bloch transform (see~\cite{AlCoVa}) we get
$$
(\div+i\ell\cdot)\left (\alpha(\na+i\ell) \hat u\right )+k_{\eps}^2\beta\hat u=\hat f\quad \text{in}\ W,
$$
together with periodic boundary conditions on $W$.
When $\alpha$ is piecewise continuous the underlying operator $P(\ell):=-\beta^{-1}(\div+i\ell)\alpha(\na+i\ell)$ is defined as the m-sectorial operator (see~\cite{K}) associated to the sectorial sesquilinear form 
\begin{align*}
&\int_W\alpha(\na+i\ell)u(\na-i\ell)\bar v dx\quad\text{on}\\
& H^1_{per}(W)=\{v\in H^1(W,\beta dx),\ \text{with periodic boundary conditions on}\ \D W\}
\end{align*}
We readily see that $P(\ell)$ is symmetric: $P(\ell)^*=P(\bar\ell)$ and it is well-known~\cite{RS} that it has a discrete spectrum which we denote by $\{\lambda_n(\ell)\}_{n>0}$ (counting multiplicity) with $\lambda_n(\ell)$ real for real $\ell$ and we denote by $e_n(\ell)$ the corresponding eigenvectors.
Since $P(\ell)$ is defined through a sesquilinear form the family $P(\ell)$ is analytic of type B (see~\cite{K}, § 4.2, p.393) thus the functions $\ell\rightarrow \lambda_n(\ell)$ are piecewise analytic and continuous on $\CC^2$. Besides, the Bloch variety
$$
\cB=\{(\ell,\lambda)\in\CC^3\ \text{such that}\ \exists n,\ \lambda_n(\ell)=\lambda \}
$$
is an analytic set because it is the null set of a regularized determinant which is an entire function, see Appendix~\ref{A2} where we recall and adapt~\cite{Ku} in this more general setting. See Figure~\ref{fig1} illustrating the Bloch variety.

Expanding the solution in the Hilbert basis $\{e_n\}_{n>0}$ one has
\begin{equation}\label{s1}
u_{\eps}(x)=\int_Be^{i\ell\cdot x}\sum_{n\in\NN^*}\frac{(\hat f,e_n(\ell))_{L^2(W)}}{k_{\eps}^2-\lambda_n(\ell)}e_n(x,\ell)d\ell.
\end{equation}
Only a finite number of terms in the sum have singular limit when $\eps$ goes to zero. We thus set
\begin{equation}\label{activeband}
J_{k}=\{n,\ \exists \ell\in B,\ \lambda_n(\ell)=k^2\}.
\end{equation}
Let us introduce the main following geometrical objects:
\begin{itemize}
\item For complex $\lambda$ let us set 
$$
\cF_{\lambda}=\{\ell\in \CC^2, \exists n, \lambda_n(\ell)=\lambda\}=\bigcup_{n>0}\lambda_n^{-1}(\lambda).
$$
which we refer to as the (complex) $\lambda$-Fermi level. 
\item For $\lambda>0$ we denote by $F_\lambda$ the set $\cF_\lambda\cap\RR^2$ which is periodic with $B$ a periodicity cell.
\item For $\lambda>0$ we put $F_\lambda^0=F_\lambda\cap B$.
\end{itemize}
We give an example of $F_{k^2}^0$ in Figure~\ref{fig1}.
\begin{figure}
\includegraphics[clip,width=8cm,height=7cm]{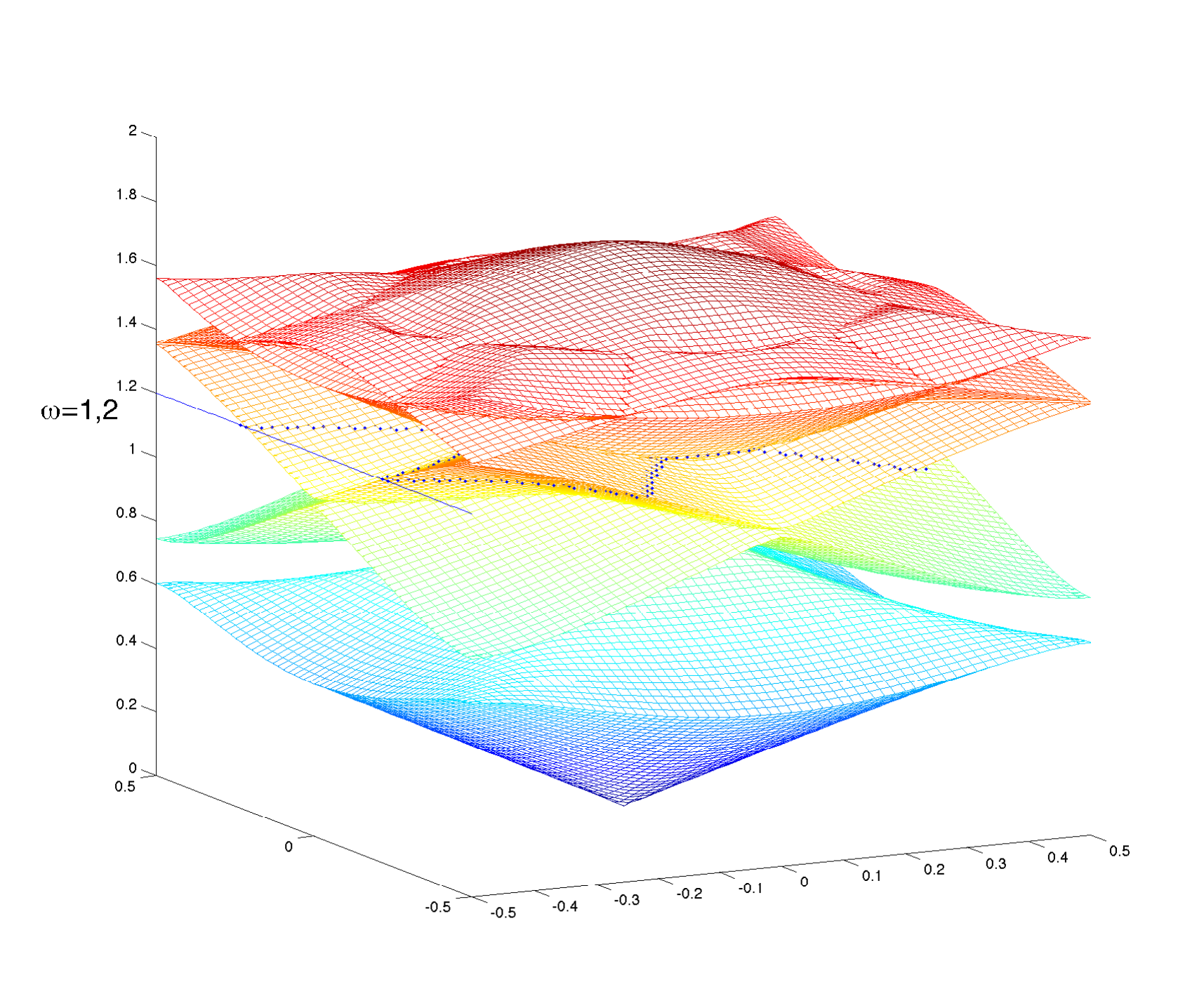}~
\includegraphics[clip,width=8cm,height=7cm]{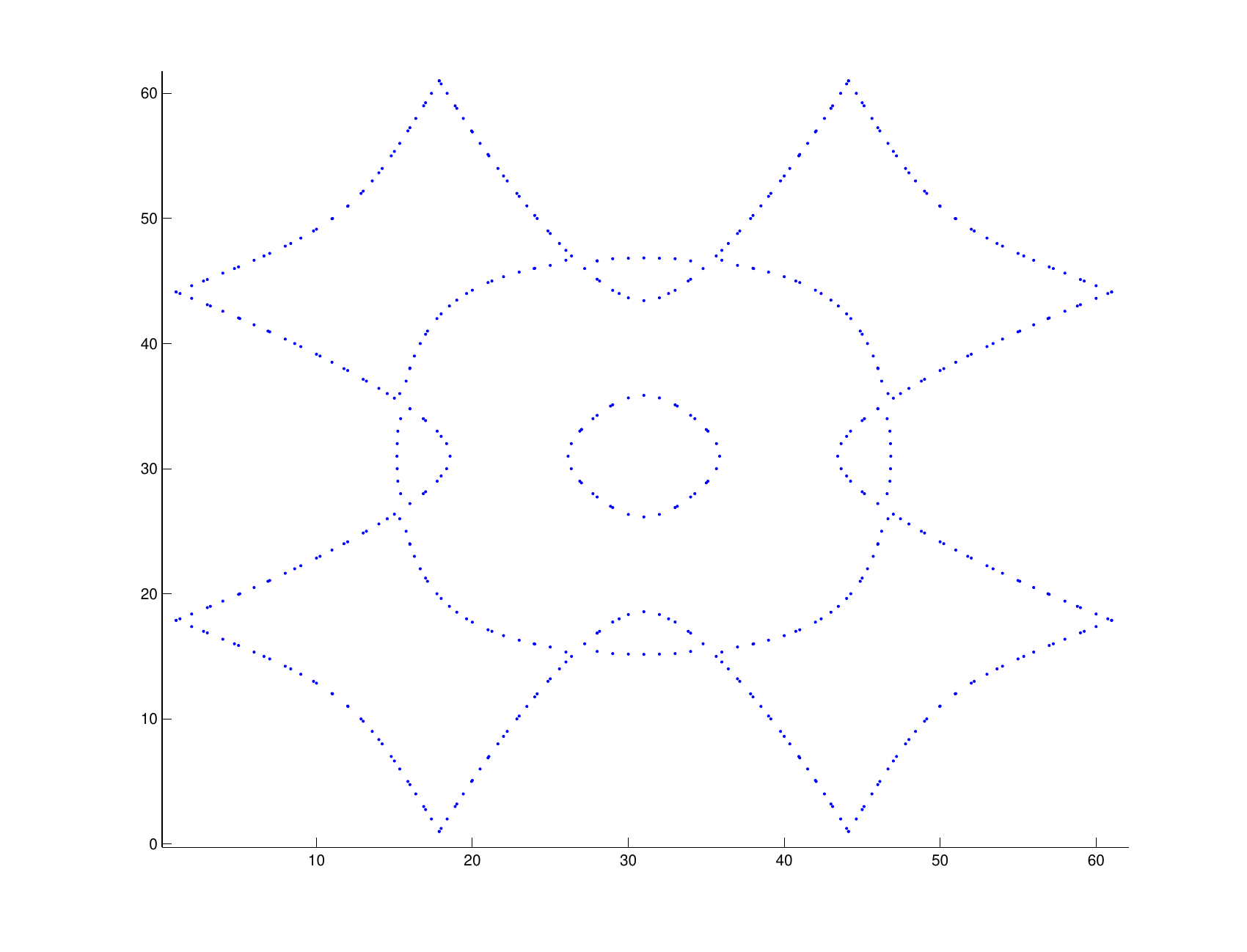}
\caption{Bloch variety (left) and Fermi level $F_{k^2}^0$ (right) with $k=1.2$ for $\alpha=1+0.8\cos(x)\cos(y)$ and $\beta=1$.}\label{fig1}
\end{figure}
With these notations we see that the terms in formula~\eqref{s1} whose index belong to $J_{k}$ are singular on $F_{k^2}^0$ for $\eps=0$. As in~\cite{Mel,Ho} our aim to handle those singular terms is to move $B$ to $\CC^2$ (actually a subset of $B$ which is a tubular neighborhood of $F_{k^2}^0$) and use the residu formula. 
So we need to find a complex deformation of $B$ avoiding the complex Fermi level $\cF_{k_{\eps}^2}$.
To do so we describe a tubular neighborhood of $F_{k^2}^0$ as a union of level sets. These level sets are first indexed by $\lambda$ in a small open intervall $I$ containing $k^2$. Then we deform $I$ to a complex curve avoiding $k_{\eps}^2$.

This procedure can be done for all $k^2$ except for the critical values of the band functions $\lambda_n$ and a subset of points of multiple eigenvalues (band crossing) which we call the set of \emph{singular crossing points} (see section~\ref{sgeo}, definition~\ref{def1}). The latter set is defined as the complementary set of \emph{regular crossing points}
characterized by the fact that up to index relabelling, the (two or more) functions $\lambda_j$ can be continued analytically through the crossing.
In the 1d real case band crossings are regular thanks to Rellich eigenvalue relabelling theorem~\cite{Re2}. In the several dimensional real case this is true~\cite{KP} except for singular crossing points.
We thus exclude the following sets
\begin{itemize}
\item the set $\sigma_0$ of real critical values of the family $\{\lambda_n\}$,
\item the set $\sigma_1=\{\lambda\in\RR,\ \exists\ell\in B\ |\ (\ell,\lambda)\ \text{is a singular crossing point}\}$.
\end{itemize}
That those sets are made of isolated points is a consequence of the stratified structure of the Bloch manifold and the dimension 2 (see Section~\ref{sgeo}).

The set $\sigma_0$ is called the set of ''Landau resonences'' in~\cite{G} where it is shown to be made of isolated points.
In~\cite{G} an other set denoted by $\sigma_{\infty}$ is also avoided but it matters only when one considers the global holomorphic extention of the resolvent operator $(P-zI)^{-1}$ from Im$z>0$ to a complex neighborhood of $\RR$ as an operator from $L^2_{comp}$ to $L^2_{loc}$. In this paper we are not concerned with $\sigma_{\infty}$ since we consider the resolvent in a small neighborhood of $\RR$ only.
It is shown in~\cite{G} that any point of $\sigma_0$ is a branch point for the resolvent associated to the equation~\eqref{e1}. See section~\ref{Sglancing} where we recall the related expression of the resolvent in the neighborhood of $\sigma_0$ in this 2d case. Let us remark that~\cite{G} does not address the issue of the assymptotic expansion of the resolvent but only its regularity (holomorphy).

A direct consequence of~\cite{KP}, Theorem 6.7 is that $\sigma_1$ is a set of isolated points of the real Bloch variety and the tangent set of such a point is a (non-isotropic) cone which is not a cusp. This is typically the case of Dirac cone~\cite{Wang}. Let us already say that the subsequent analysis takes advantage of the fact that this set is made of isolated points and thus one needs to implement a missing step to deal with higher dimensions where points of $\sigma_1$ are generically non isolated. 



\section{Main results}

Our first result is a generic formula for the leading part of the limit of $u_{\eps}$ as $\eps$ goes to zero. Generically $F_{k^2}^0$ is 1-dimensional or void. We address the former case (the latter is already well-known and scattering does not take place). The formula we get is an integral on $F_{k^2}^0$ corresponding to the limit of the residu of the expression~\eqref{s1}. The expression involves the spectral projector of $P(\ell)$ which is in general position one dimensional and given by $(\hat f,e_n(\ell))_{L^2(W)}e_n(x,\ell)$ for $\ell\in\lambda_n^{-1}(k^2)$. This expression is false when $\lambda_n(\ell)$ is multiple.

In general position the set $\cC_{k^2}$ of points in $F_{k^2}^0$ at which two or more bands cross is generically finite.
Thus for $\ell\in F_{k^2}^0\setminus\cC_{k^2}$ there is a unique integer $n\in J_{k}$ such that $\ell\in\lambda_n^{-1}(k^2)$ hence one can define the following two functions a.e. in $F_{k^2}^0$
\begin{equation}\label{pw}
\tilde\lambda(\ell):=\lambda_n(\ell)\quad\text{and}\quad \tilde e(\ell):=e_n(\ell).
\end{equation}

Our first result is
\begin{theo}\label{t1}
Let $k$ be such that $k^2\notin\sigma_0\cup\sigma_1$ in general position. Let
$$
F_{k^2}^+(x)=\{\ell\in F_{k^2}^0\setminus\cC_{k^2},\ \na\tilde\lambda(\ell)\cdot x>0\}.
$$
Then $u_{\eps}$ converges to $u$ in $H^1_{loc}$ expanding
\begin{equation}\label{f1}
u(x)=2i\pi\int_{F_{k^2}^+(x)}e^{i\ell\cdot x}\frac{(\hat f(\ell), \tilde e(\ell))_{L^2(W)}}{\|\na\tilde\lambda(\ell)\|}\tilde e(x,\ell)\sigma(d\ell)+R(x)
\end{equation}
where $\sigma$ is the length measure on $F_{k^2}^+(x)$ and $R\in H^1(\RR^2)$.
\end{theo}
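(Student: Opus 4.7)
The plan is to start from the Bloch expansion \eqref{s1}, split the sum into a \emph{regular part} (indices $n \notin J_k$) and a \emph{singular part} (indices $n \in J_k$), and attack the singular part by a complex deformation of the Brillouin zone $B$ avoiding the complex Fermi variety $\cF_{k_\eps^2}$. For the regular part, $k_\eps^2 - \lambda_n(\ell)$ is uniformly bounded away from zero for $(\ell, \eps) \in B \times [0, \eps_0]$, so the Bloch isometry gives convergence in $H^1(\RR^2)$ and this piece contributes to $R$. For each $n \in J_k$ I introduce a smooth cutoff $\chi_n$ supported in a small tubular neighborhood $T_n$ of $\lambda_n^{-1}(k^2) \cap B$, chosen so the $T_n$ are pairwise disjoint outside $\cC_{k^2}$; the contribution of $1 - \chi_n$ is again regular and goes into $R$.

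It remains to treat the localized singular term
\[
I_n(x, \eps) = \int_B \chi_n(\ell)\, e^{i \ell \cdot x}\, \frac{(\hat f(\ell), e_n(\ell))_{L^2(W)}}{k_\eps^2 - \lambda_n(\ell)}\, e_n(x, \ell)\, d\ell.
\]
Since $k^2 \notin \sigma_0$ gives $\na \lambda_n \neq 0$ on $\lambda_n^{-1}(k^2)$, and $\cC_{k^2} \cap T_n$ is finite by virtue of $k^2 \notin \sigma_1$, I excise small disks around $\cC_{k^2}$ (these excisions land in $R$, using the analytic band relabeling that $k^2 \notin \sigma_1$ guarantees) and apply the holomorphic implicit function theorem to get analytic coordinates $(\sigma, \lambda)$ on a complex neighborhood $\wt T_n \subset \CC^2$ of $T_n \setminus \cC_{k^2}$, with Jacobian $|\na \lambda_n(\ell)|^{-1}$ (co-area formula). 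In these coordinates I deform the real $\lambda$-interval $I \ni k^2$ to a complex path $\gamma_\pm$: upward into $\im \lambda > 0$ on the part of $\lambda_n^{-1}(k^2)$ where $\na \lambda_n(\ell) \cdot x > 0$, downward otherwise. This is forced by the oscillatory phase: the first-order displacement $\delta \ell = i t\, \na \lambda_n/|\na \lambda_n|^2$ corresponding to $\delta \lambda = i t$ satisfies $\im(\delta \ell \cdot x) = t\, \na \lambda_n \cdot x/|\na \lambda_n|^2$, so $e^{i \ell \cdot x}$ remains bounded precisely on $F_{k^2}^+(x)$ when $t>0$.

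By Cauchy's residue theorem the upper deformation gives the $\gamma_+$-integral plus $2i\pi$ times the integrand (stripped of the $(k_\eps^2 - \lambda)^{-1}$ factor) evaluated at the enclosed pole $\lambda = k_\eps^2$, while the lower deformation contributes no residue. Letting $\eps \to 0$, the $\gamma_\pm$-integrals remain uniformly bounded (the distance from $\lambda$ to the real spectrum stays positive) and, by Bloch-type $L^2$ estimates, lie in $H^1(\RR^2)$. The residue contributions, carrying the co-area factor $|\na \lambda_n|^{-1}$, sum over $n \in J_k$ and, using that on $F_{k^2}^0 \setminus \cC_{k^2}$ exactly one band is active so that \eqref{pw} applies, assemble into
\[
2i\pi \int_{F_{k^2}^+(x)} e^{i \ell \cdot x}\, \frac{(\hat f(\ell), \tilde e(\ell))_{L^2(W)}}{\|\na \tilde \lambda(\ell)\|}\, \tilde e(x, \ell)\, \sigma(d\ell),
\]
producing \eqref{f1} with $R \in H^1(\RR^2)$ the sum of the regular pieces.

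The main obstacle is the assembly of the local deformations into a genuine global complex 2-cycle in $\CC^2 \setminus \cF_{k_\eps^2}$ homologous to $B$: the direction of deformation flips across the \emph{glancing} set $\{\ell \in F_{k^2}^0 : \na \tilde \lambda(\ell) \cdot x = 0\}$, and one needs an $x$-dependent partition of unity whose transition zones provide enough imaginary drift to avoid $\cF_{k_\eps^2}$ without spoiling the bound on $e^{i \ell \cdot x}$. A secondary but delicate point is the handling of the crossings in $\cC_{k^2}$, where one must invoke $k^2 \notin \sigma_1$ to relabel bands analytically through each crossing and verify that the ensuing patching in $\wt T_n$ preserves the validity of the residue computation.
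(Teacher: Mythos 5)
Your skeleton is essentially the paper's: split off the regular part, localize near $F_{k^2}^0$, pass to co-area coordinates $(\sigma,\lambda)$ with Jacobian $\|\na\tilde\lambda\|^{-1}$, deform the one-dimensional $\lambda$-contour around $k_\eps^2$, and read off the residue. Where you and the paper part ways, there is a genuine gap in your argument that the paper resolves with a device you did not supply.

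You pose the extraction of the residue as the construction of a global complex $2$-cycle in $\CC^2\setminus\cF_{k_\eps^2}$ homologous to $B$, and then correctly observe that the required deformation direction must flip across the glancing set $\{\ell\in F_{k^2}^0:\na\tilde\lambda(\ell)\cdot x=0\}$, leaving the gluing as an unresolved obstacle. The paper never attempts that gluing. After writing each contribution as $\int_K\frac{\psi(\lambda)}{k_\eps^2-\lambda}\,g(\lambda)\,d\lambda$ with $g(\lambda)=\int_{\Cc(\lambda)}\cdots$, it inserts a partition of unity $1=\psi_0+\psi_++\psi_-$ along the Fermi curve with $\pm\na\mu\cdot x>0$ on the support of $\psi_\pm$, deforms the $\lambda$-contour only for $g_\pm$ (where the direction is fixed), and for $g_0$ — the glancing transition piece — passes directly to the $\eps\to0$ limit via the Sokhotski--Plemelj identity $(k_\eps^2-\lambda)^{-1}\to vp\,(k^2-\lambda)^{-1}-i\pi\delta_{k^2}$, obtaining $-i\pi g_0(k^2)$ plus a principal value. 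These extra terms (together with the mismatch $2i\pi\int_{F^+}(1-\psi_+)$) must then be shown to lie in $H^1(\RR^2)$, and the reason they do is precisely that on the support of $\psi_0$ the tangent $\D_t\ell$ is nearly parallel to $x$, so the phase $\ell\cdot x$ is non-stationary in $t$ and one integrates by parts. This is the technical idea your proposal lacks; without it the formula \eqref{f1} cannot be reached.

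A secondary but real issue is your treatment of the crossings $\cC_{k^2}$. You excise small disks around them and assert these excised contributions ``land in $R$,'' but since the singular factor $(k_\eps^2-\lambda_n(\ell))^{-1}$ blows up along the Fermi curve passing through each such disk, the excised piece is still a singular oscillatory integral and its $H^1$ membership is not automatic. The paper avoids excision altogether: since $k^2\notin\sigma_1$, the bands are relabelled into locally analytic pairs $(\mu_j,v_j)$, and the unfolding lemma reorganizes $F_{k^2}^0$ (after translating components by periods of $B$) into globally analytic closed or $B$-periodic arcs $\Cc_j$ on which the parametrization $\ell_j(t,\lambda)$ is analytic straight through the crossings. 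The co-area change of variables and the residue computation then go through uniformly on each $\Cc_j$, with no patching to justify.
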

See Figure~\ref{fig2} for an example of $F_{k^2}^+(x)$.
\begin{figure}
\begin{center}
\includegraphics[clip,width=10cm,height=9cm]{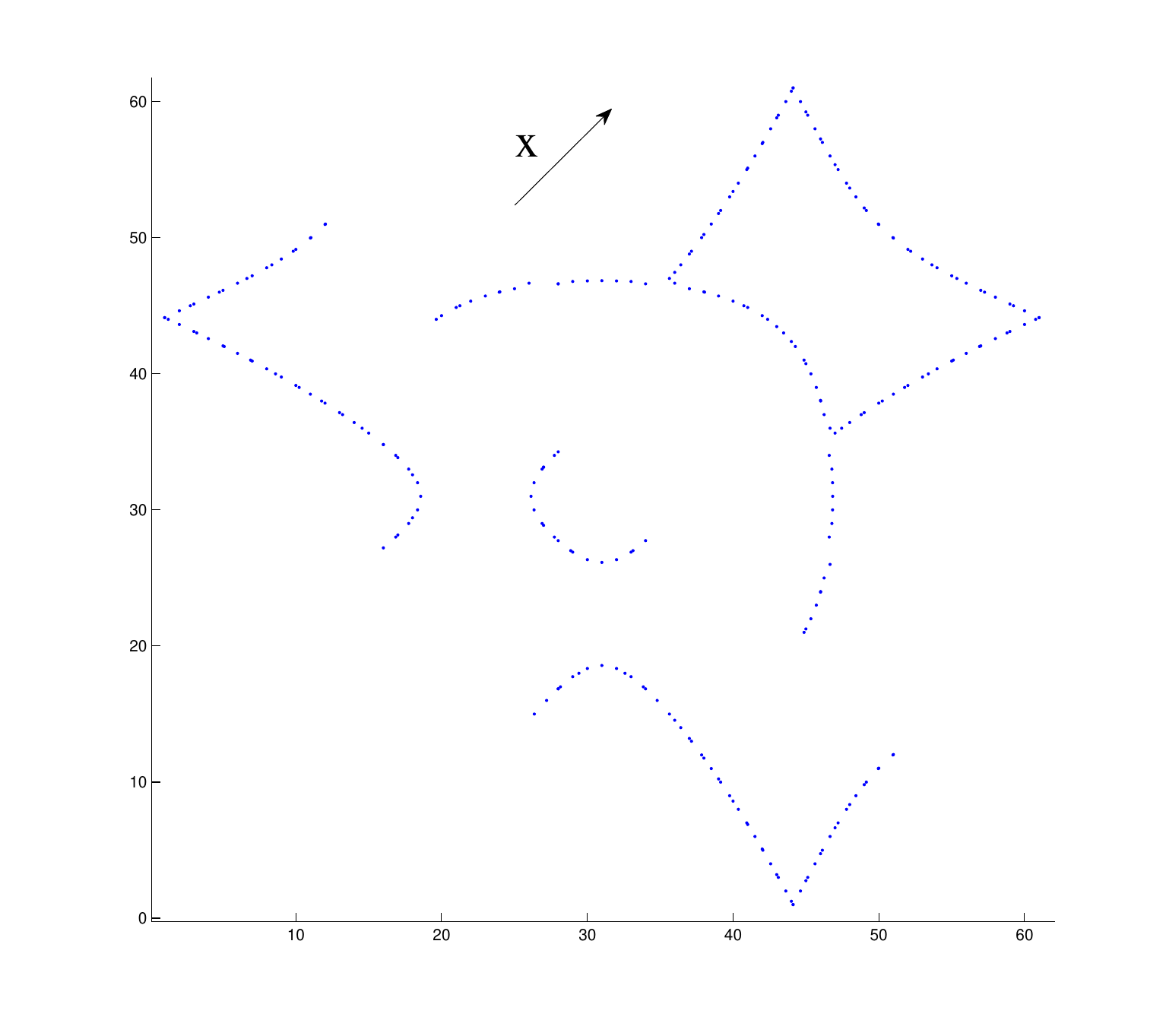}
\caption{The set $F_{k^2}^+(x)$ with $x=(1,1)$ and $\alpha,k$ as in Figure~\ref{fig1}.}\label{fig2}
\end{center}
\end{figure}

Since the integrand is quasiperiodic one can provide a full expansion of the integral as a series of fractional powers of $1/|x|$ by the mean of the stationary phase method. However a difficulty arises because $F^0_{k^2}$ is generically not convex. Using the periodicity of the integrand with respect to the Floquet variable $\ell$ one can arrange $F^0_{k^2}$ as the union of close smooth curves or periodic smooth curves (see Figure~\ref{fig3}). Some curves are convex others have inflexion points $\ud\ell_j$. 

The stationary phase method shows that critical points of the phase $i\ell\cdot x$ are such that $\na\tilde\lambda$ is parallell to $x$ at such points. Let us denote them by $\ell_n$. Then $\ell_n$ is a function of $\theta=arg(x)$ and is well defined as long as it does not meet any inflexion point. See Figure~\ref{fig4}.
\begin{figure}
\includegraphics[clip,width=16cm,height=7cm]{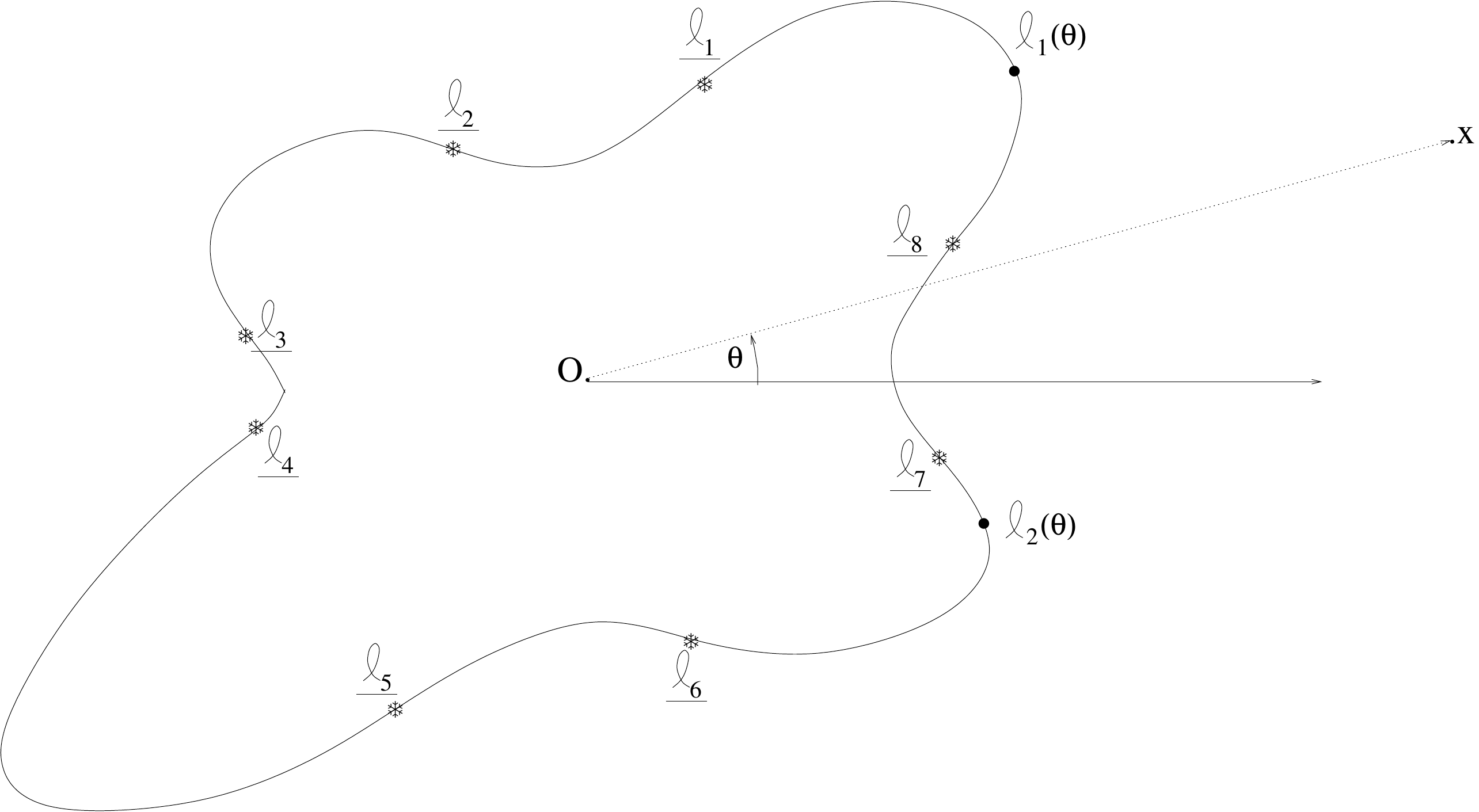}
\caption{A closed component of $F_{k^2}^0$ with the points $\ud\ell_j$ and $\ell_n(\theta)\in F_{k^2}^+(x)$}\label{fig4}
\end{figure}
Again for sake of simplicity we adress the problem in general position assuming that
inflexion points are \emph{non degenerated} and two inflexion points correspond to \emph{distinct} angles $\theta_j$.
Let us thus denote by $I_n\subset[0,2\pi]$ the domain of $\ell_n$ and denote by $\theta_j$ the angle for which for some $n$, $\ell_n(\theta_j)=\ud\ell_j$. Finally let $\cN_j$ be a small neighborhood of $\theta_j$ which does not contain any other inflexion point.

Then the leading term in the expression of $u$ expands asymptotically according to

\begin{theo}\label{t2}
  Let $k$ as in Theorem~\ref{t1}.

  1) For $\theta\in[0,2\pi]\setminus(\cup_j\cN_j)$ there holds for $r=\|x\|$ big
\begin{equation}\label{exp}
u(x)=\frac{i\sqrt{2\pi}}{\sqrt{r}} e^{-i\pi/4}\sum_{n=1}1_{I_n}(\theta) e^{i\ell_n(\theta)\cdot x}\frac{(\hat f(\ell_n),\tilde e(\ell_n))_{L^2(W)}}{\|\na\tilde\lambda(\ell_n)\|\kappa_n^{1/2}}\tilde e(x,\ell_n)+\tilde R,
\end{equation}
where $\kappa_n$ is the curvature of $F^+_{k^2}(x)$ at $\ell_n$ and $\tilde R\in H^1(\RR^2)$.

2) For $\theta\in\cN_j$ with $\theta_j=\bar I_{n_1}\cap \bar I_{n_2}$
\begin{multline}
  u(x)=\frac{e^{i\ud\ell_j\cdot x}}{r^{1/3}}\Big(\alpha_jAi(r^{2/3}\gamma_j(\theta-\theta_j))\tilde e(\ud\ell_j,x)+\frac{\beta_j}{r^{1/3}}Ai'(r^{2/3}\gamma_j(\theta-\theta_j)) w_j(x)\Big)\label{fAi}\\
  +\frac{i\sqrt{2\pi}}{\sqrt{r}} e^{-i\pi/4}\sum_{n\notin\{n_1, n_2\}}e^{i\ell_n\cdot x}\frac{(\hat f(\ell_n),\tilde e(\ell_n))_{L^2(W)}}{\|\na\tilde\lambda(\ell_n)\|\kappa_n^{1/2}}\tilde e(x,\ell_n)+\tilde R
\end{multline}
where $Ai$ is Airy's function, $a_j,b_j$ are non zero real, $\gamma_j(0)=0$, $w_j$ is periodic and belongs to $H^1(W)$ and $\tilde R\in H^1(\RR^2)$. 
\end{theo}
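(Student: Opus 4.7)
The plan is to apply the method of stationary phase to the oscillatory line integral~\eqref{f1}. Writing $x=r\omega$ with $\omega=(\cos\theta,\sin\theta)$, the phase reads $r\,\omega\cdot\ell$ and the amplitude $\frac{(\hat f(\ell),\tilde e(\ell))}{\|\na\tilde\lambda(\ell)\|}\tilde e(x,\ell)$ is smooth in $\ell$ away from the finite set $\cC_{k^2}$, with $\tilde e(x,\cdot)$ bounded uniformly in $x$ by $W$-periodicity. Using the $\ell$-periodicity to rearrange $F_{k^2}^0$ into finitely many smooth compact curves $\Gamma_m$ (closed loops or arcs closed up by the dual lattice, cf.\ Figure~\ref{fig4}) and parametrizing each by arc length, the critical points of $s\mapsto\omega\cdot\ell_m(s)$ are exactly those where $\omega\parallel\na\tilde\lambda(\ell)$, since $\na\tilde\lambda$ is normal to $\Gamma_m$. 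The sign condition $\na\tilde\lambda\cdot x>0$ that defines $F_{k^2}^+(x)$ then selects precisely the points $\ell_n(\theta)$.

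For part~1) (non-degenerate regime, $\theta\notin\cup_j\cN_j$), a short computation along an arc-length parametrization gives $\frac{d^2}{ds^2}\bigl(\omega\cdot\ell_m(s)\bigr)\big|_{s=s_n}$ equal to a signed multiple of the curvature, non-zero at each $\ell_n(\theta)$ by hypothesis, with the sign consistent with the $e^{-i\pi/4}$ factor on $F_{k^2}^+(x)$. I would cut off a small neighborhood of each critical point with a partition of unity on $\Gamma_m$; the complement contributes $O(r^{-N})$ for every $N$ by non-stationary phase (iterated integration by parts), and each local integral is handled by the one-dimensional stationary phase formula, producing the $r^{-1/2}$ terms of~\eqref{exp} with the $e^{-i\pi/4}/\kappa_n^{1/2}$ factor and amplitude evaluated at $\ell_n(\theta)$. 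The remainder $\tilde R$ collects the original $R\in H^1(\RR^2)$ from Theorem~\ref{t1} and the next-order stationary phase corrections, which are $O(r^{-3/2})$ with analogous bounds on first derivatives, hence square-integrable together with one derivative in dimension two.

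For part~2) (degenerate regime, $\theta\in\cN_j$), the two stationary points $\ell_{n_1}(\theta)$ and $\ell_{n_2}(\theta)$ coalesce at the inflexion point $\ud\ell_j$ as $\theta\to\theta_j$, where by non-degeneracy the restricted phase has a critical point of order exactly three. I would then apply the Chester--Friedman--Ursell uniform reduction: on a fixed neighborhood of $\ud\ell_j$ on the corresponding $\Gamma_m$ there is a smooth diffeomorphism $s\mapsto t$, depending smoothly on $\theta\in\cN_j$, conjugating the phase to the cubic normal form $\ud\ell_j\cdot\omega+\gamma_j(\theta-\theta_j)\,t-t^3/3$ with $\gamma_j(0)=0$. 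The resulting local integral is precisely the defining Airy integral, yielding the two contributions $r^{-1/3}\alpha_j\,Ai(r^{2/3}\gamma_j(\theta-\theta_j))$ and $r^{-2/3}\beta_j\,Ai'(r^{2/3}\gamma_j(\theta-\theta_j))$ of~\eqref{fAi}, with the real non-zero coefficients $\alpha_j,\beta_j$ determined by the two quadrature formulas (value of the amplitude, and of its first $t$-derivative, at $t=0$), and the periodic $w_j(x)\in H^1(W)$ coming from the first $t$-variation of $\tilde e(x,\cdot)$ at $\ud\ell_j$. The stationary points $\ell_n(\theta)$ with $n\notin\{n_1,n_2\}$ remain non-degenerate on $\cN_j$ and contribute exactly as in part~1).

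The main obstacle is the uniform Airy reduction in part~2): I must verify smoothness of the CFU change of variable across $\theta=\theta_j$, so that the expansion is genuinely uniform on $\cN_j$ and matches, at $\D\cN_j$, the disappearing stationary-phase contributions of part~1) via the large-argument asymptotics of $Ai$ and $Ai'$. The non-degeneracy hypothesis on $\ud\ell_j$ guarantees the existence of the cubic normal form, while the assumption that distinct inflexion points correspond to distinct angles isolates each Airy regime from the others. All remaining ingredients---non-stationary phase away from critical points, uniform boundedness of the Bloch factor $\tilde e(x,\ell)$, and absorption of $O(r^{-3/2})$ corrections into $H^1(\RR^2)$---are routine.
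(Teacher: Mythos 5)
Your overall plan — non‑degenerate stationary phase on the smooth arcs, a Chester–Friedman–Ursell/Airy uniform reduction where two critical points coalesce at a non‑degenerate inflexion, and collecting the $O(r^{-3/2})$ corrections into $\tilde R$ — is exactly the method of Section~5 and Appendix~D of the paper (the paper invokes H\"ormander's Theorem~7.7.18, which is the CFU normal form). However, there is one genuine gap in the way you launch the argument.

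You apply stationary phase directly to the integral in~\eqref{f1}, which runs over $F_{k^2}^+(x)=\{\ell\in F_{k^2}^0\ :\ \na\tilde\lambda(\ell)\cdot x>0\}$. This domain has an $x$‑dependent \emph{sharp} boundary at the shadow transition $\{\na\tilde\lambda\cdot x=0\}$; at such a point the tangent $\D_t\ell$ is parallel to $\omega$, so the phase is maximally non‑stationary and a single integration by parts produces a boundary term of size
$$
\Big|\tfrac{a(\ell_{\rm bd}(\theta))}{r\,\D_t\ell\cdot\omega}\,e^{ir\,\ell_{\rm bd}(\theta)\cdot\omega}\Big|\sim r^{-1},
$$
which is \emph{not} improved by iterating integration by parts (each step leaves a boundary term, the leading one being $O(r^{-1})$). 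Hence the assertion that the complement of the critical neighborhoods contributes $O(r^{-N})$ for every $N$ is false here; worse, $r^{-1}\notin L^2(\RR^2)$, so this contribution cannot be absorbed into $\tilde R\in H^1(\RR^2)$. If instead you take the partition of unity on the full closed/periodic curve $\Gamma_m$ to avoid boundaries, then the integral also picks up the \emph{incoming} stationary points on $F_{k^2}^-$, which contribute at order $r^{-1/2}$ with the opposite $e^{+i\pi/4}$ factor and are absent from~\eqref{exp}; you would have to explain why they disappear, which they do not.

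The paper circumvents this by never applying stationary phase to~\eqref{f1} itself. It applies it to the residue $g_+(k_\eps^2)$ from the contour deformation~\eqref{Resout}, whose integrand carries the \emph{smooth} cutoff $\psi_+$ vanishing near the shadow transition (see~\eqref{3bit}); there are then no boundary terms and the non‑stationary remainder is genuinely rapidly decreasing. The shadow region is handled separately: $w_0$ (with its own smooth cutoff $\psi_0$) is shown to lie in $H^1(\RR^2)$ by two integrations by parts and a weighted estimate, and the incoming part enters through $w_-$, which carries no residue and is likewise $H^1$. To make your proof complete you should either redo the argument starting from $g_+$, or insert a smooth partition $\psi_0+\psi_++\psi_-=1$ on each $\Gamma_m$ separating the illuminated, shadow and dark arcs, treat the shadow piece by the paper's $w_0$‑type estimate, and only then apply the stationary phase/Airy machinery to the smoothly cut‑off illuminated piece.
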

\begin{rem}
In~\eqref{fAi} the decay rate of the first term is $O(r^{-1/2})$ because the oscillatory part of Airy function $Ai(s)$ decays like $O(s^{-1/4})$.
  
We expect that the remainder decreases more rapidely in the far. Actually $\tilde R=R_1+R_2$ where $R_1$ corresponds to terms in $F_{k^2}^-=F_{k^2}^0-F_{k^2}^+$ and $R_2$ is related to the stationary phase theorem. We prove that $R_1$ decreases faster than any polynomial and $R_2=O(r^{-3/2})$. However we don't know how to prove any better decrease result for $R$ because it is a Bloch inverse transform whose integration set meets $\sigma_1$ if it is not empty. This prevents the use of the instationary phase theorem.
\end{rem}

\begin{defi}\label{defi1}
  A solution to equation~\eqref{e1} is called an outgoing solution if there are finitely many open intervals $(I_n)_n$ of $[0,2\pi]$ and a neighborhood $\cN$ of the boundaries $\theta_j$ of these intervals such that for $\theta\in [0,2\pi]\setminus\cN$
  \begin{equation}\label{convexp}
  u(x)=\frac{1}{\sqrt{r}}\sum_n1_{I_n}(\theta)c_n(\theta) e^{i\ell_n\cdot x}\tilde e(x,\ell_n)+R
  \end{equation}
  where $\ell_n\in F_{k^2}^+$ and $\na\tilde\lambda(\ell_n)$ is parallell to $x$ and $R\in H^1(\RR^2)$ while for $\theta\in\cN_j$ a small neighborhood of one extremity $\theta_j=I_{n_1}\cap I_{n_2}$
  \begin{equation}\label{inflexp}
u=\frac{e^{ix\cdot\ud\ell_j}}{r^{1/3}}\Big(\tilde c_jAi(r^{2/3}\gamma_j(\theta-\theta_j))\tilde e(\ud\ell_j,x)+\frac{1}{r^{1/3}}Ai'(r^{2/3}\gamma_j(\theta-\theta_j)) w_j(x)\Big)+\frac{1}{\sqrt{r}}\sum_{n\notin\{n_1, n_2\}}c_n(\theta)e^{i\ell_n\cdot x}\tilde e(x,\ell_n)+R
\end{equation}
where $\ud\ell_j=\ell_{n_1}(\theta_j)=\ell_{n_2}(\theta_j)$, $w_j$ is a periodic fonction belonging to $H^1(W)$ and $R\in H^1(\RR^2)$.
\end{defi}
Uniqueness of outgoing solutions requires that $P$ has no eigenvalue which is the case since the spectrum of $P$ is the union of $\lambda_n(B)$. However $P$ may have singular spectrum corresponding to the fact that one of the $\lambda_n$ is flat on a non empty ball.
\begin{ass}\label{ass1}
For all $n$, $\lambda_n$ is a non constant function on any open set.
\end{ass}
Under this assumption the spectrum of $P$ is purely essential~\cite{BS}.
Finally we need a technical assumption to prove uniqueness in Rellich's way:  we need that the remainder is smooth enough to consider the trace of $\na R$ along a circle. This is true if we assume
\begin{ass}\label{ass2}
The coefficient $\alpha$ is either lipschitz or discontinuous along smooth curves as in~\cite{LiNi} and in this case we also need $f\in L^{\infty}$.
\end{ass}
\begin{theo}\label{t3}
Under Assumptions~\ref{ass1} and \ref{ass2} equation~\eqref{e1} has a unique outgoing solution.  
\end{theo}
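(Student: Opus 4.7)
The plan is to adapt Rellich's uniqueness argument, using a flux identity on large disks combined with Assumption~\ref{ass1}. By linearity, it suffices to show that any outgoing solution of the homogeneous equation $\div(\alpha\na u)+k^2\beta u=0$ in $\RR^2$ vanishes. Assumption~\ref{ass2} (Lipschitz or piecewise-smooth $\alpha$, plus $f\in L^\infty$) provides enough local regularity for $u$ so that its normal trace $\partial_r u$ on the circle $\partial B_R$ is well defined.

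The first step is the flux identity. Since $\alpha,\beta,k^2$ are real, Green's identity on the disk $B_R$ yields
$$0 = \im\int_{B_R} \bar u\bigl[\div(\alpha\na u)+k^2\beta u\bigr]dx = \im\int_{\partial B_R}\alpha\bar u\,\partial_r u\,d\sigma.$$
The second step is to evaluate the right-hand side as $R\to\infty$ using the outgoing expansion of Definition~\ref{defi1}. Away from inflection angles, insertion of~\eqref{convexp} gives diagonal ($n=m$) and off-diagonal ($n\ne m$) terms. The off-diagonal pieces carry a phase $e^{i(\ell_n-\ell_m)\cdot x}$ that is non-stationary in $\theta$ (since $\ell_n(\theta)\ne\ell_m(\theta)$ for $n\ne m$) and vanish in the limit by Riemann-Lebesgue applied along $d\theta$. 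The diagonal pieces, after averaging over the many Bloch periods fitting in a tangential arc, produce a density proportional to
$$|c_n(\theta)|^2\,\Bigl\langle \alpha\,\im\bigl(\overline{\tilde e(\ell_n)}(\na+i\ell_n)\tilde e(\ell_n)\bigr)\Bigr\rangle_W \cdot \hat x.$$
A Feynman-Hellmann computation, obtained by differentiating $P(\ell)\tilde e(\ell)=\tilde\lambda(\ell)\tilde e(\ell)$ in $\ell$ and pairing with $\tilde e(\ell)$ in $L^2(W,\beta dx)$, rewrites this averaged current as $\tfrac12\na\tilde\lambda(\ell_n)\,\langle\beta|\tilde e(\ell_n)|^2\rangle_W$. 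Since $\ell_n(\theta)$ is defined so that $\na\tilde\lambda(\ell_n)$ is parallel to $\hat x$ in the outgoing sense, each diagonal term is a strictly positive multiple of $|c_n(\theta)|^2\|\na\tilde\lambda(\ell_n)\|$. Near an inflection angle $\theta_j$, inserting~\eqref{inflexp} and using the oscillatory asymptotics $Ai(-s)\sim s^{-1/4}\cos(\tfrac23 s^{3/2}-\pi/4)$ and $Ai'(-s)\sim s^{1/4}\sin(\tfrac23 s^{3/2}-\pi/4)$ produces a further non-negative contribution bounded below by $|\tilde c_j|^2$ and $\|w_j\|^2_{L^2(W)}$, the cross products $Ai\cdot Ai'$ being purely oscillatory and averaging to zero.

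Since every term of the limiting flux is non-negative and the total vanishes, all coefficients $c_n$, $\tilde c_j$ and all functions $w_j$ in the outgoing expansion must vanish. Consequently $u$ reduces to the remainder, which belongs to $H^1(\RR^2)\subset L^2(\RR^2)$, and satisfies $Pu=k^2 u$ in $L^2$. By Assumption~\ref{ass1} and~\cite{BS} the operator $P$ has purely essential (in fact absolutely continuous) spectrum and admits no $L^2$-eigenfunction, hence $u\equiv 0$.

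The main obstacle lies in the second step: turning the pointwise-in-$\theta$ asymptotic expansion into a uniform estimate on $\partial B_R$ that can be integrated in $\theta$ with controlled error, and matching the Airy transition zone around $\theta_j$ to the regular stationary-phase zones on either side. One has to verify that the Airy window is wide enough to absorb the contributions of the two merging modes $n_1,n_2$ but narrow enough not to contaminate the regular flux from the remaining modes, and that the off-diagonal Bloch cross terms on $\partial B_R$ are indeed negligible (requiring a non-stationary phase estimate that keeps track of the curvature of $F_{k^2}^0$). The positivity of each diagonal contribution, granted by the Feynman-Hellmann identity together with the outgoing definition of $\ell_n(\theta)$, is then the pivot on which the whole argument turns.
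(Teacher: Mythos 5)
Your proposal follows the paper's argument essentially step by step: reduce to the homogeneous problem, write the Rellich flux identity $\Im\int_{C_t}\alpha\,\D_n u\,\bar u\,d\sigma=0$, insert the outgoing expansion, discard cross terms by non-stationary phase, average the Bloch-periodic current over the cell and rewrite it via the Feynman--Hellmann (group-velocity) identity~\eqref{speed} as a positive multiple of $\na\tilde\lambda\cdot\vec n$, handle the Airy transition zones, and conclude $u=R\in H^1(\RR^2)$ must vanish because the spectrum of $P$ is purely essential under Assumption~\ref{ass1}. This is precisely the content of the paper's Lemma~\ref{l3} and the short proof of Theorem~\ref{t3} that follows it, so no substantive gap remains.
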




\section{Limiting absorption principle for the outgoing resolvent}

To prove Theorem~\ref{t1} we introduce a smooth cutoff function $\psi$ which vanishes everywhere except in a small neighborhood of $k^2$ on which the set $J_{k}$ remains constant. 

Then let us split
\begin{equation}\label{split1}
u_{\eps}=u_{1,\eps}+u_{2,\eps}\ ,\qquad u_{1,\eps}=\int_B\sum_{n\in J_k}\psi(\lambda_n(\ell))\frac{(\hat f,e_n(\ell))_{L^2(W)}}{k_{\eps}^2-\lambda_n(\ell)}e_n(x,\ell)d\ell.
\end{equation}
Let us first analyze $u_{2,\eps}=u-u_{1,\eps}$. By definition of $\psi$ there is a constant $c>0$ such that 
$$
\frac{1-\psi(\lambda_n(\ell))}{|k_{\eps}^2-\lambda_n(\ell)|}<c/\lambda_n(\ell)\quad\forall n
$$ 
so the Bloch transform
$$
\hat u_{2,\eps}(x,\ell)=\sum_{n\in \NN^*}(1-\psi(\lambda_n(\ell)))\frac{(\hat f,e_n(\ell))_{L^2(W)}}{k_{\eps}^2-\lambda_n(\ell)}e_n(x,\ell)
$$
belongs to $L^2(B;H^1(W))$. Thus $u_{2,\eps}\in H^1(\RR^2)$  (see Appendix~\ref{A1} where we collected some classical results about Floquet-Bloch transform on Sobolev spaces).

\bigbreak


To analyze and compute the limit of $u_{1,\eps}$ when $\eps$ goes to zero we need to modify the integration set $B$ to avoid $\cF_{k_{\eps}^2}$ for all positive $\eps$ close to zero. This was done by C. Gerard~\cite{G} in a theoretical way using a \emph{complex displacement} according to Pham~\cite{Ph}. Since the Fermi levels are parameterized by $\lambda$, a complex displacement amounts to choosing a homotopy for $\lambda$ from an interval around $k^2$ to a half loop in the lower complex plane. 
This allowed to extend the validity of the resolvant associated to~\eqref{e1} in a neighborhood of the real axis but no formula was given to compute the integral defining $u_{1,\eps}$ (except in the difficult case $k^2\in\sigma_0$).

Here on the contrary in order to compute the limit when $\eps$ goes to zero we push $\lambda$ to the upper complex plane over $k_{\eps}^2$ and use Cauchy residu formula.

Before going to the details we need more information about the topology of the level sets $F_{k^2}^0$ and explain how to continuously deform it to $\cF_{\lambda}$ when $\lambda$ goes to the complex domain. This is the aim of the next subsections.

\subsection{Geometry of a Fermi level}\label{sgeo}

Let us recall some general facts (see \cite{Wil}). Since the Bloch variety is an analytic set it possesses a Whitney stratification. This stratification is by regularity and dimension:


$$
\cB=\cB^r\cup\cB^\times,
$$
where $\cB^r$ is the regular part of $\cB$ which is open and locally a 2d-manifold and $\cB^\times$ the complementary set. The latter is a subset of the points where $\lambda_n$ are multiple. Indeed, by analytic perturbation theory, any point $(\ell,\lambda_n(\ell))$ where $\lambda_n$ is simple defines locally a manifold and thus is a regular point of $\cB$.
Again $\cB^{\times}=(\cB^{\times})^r\cup\cB^{\times\times}$ where $(\cB^{\times})^r$ is locally a 1d-manifold and any connected component of $\cB^r,(\cB^{\times})^r,\cB^{\times\times}$ is called a \emph{stratum}. It is a basic result from~\cite{Whi} that the number of strata is locally finite.

\begin{lem}\label{l1}
The set $\sigma_0$ is locally finite.
\end{lem}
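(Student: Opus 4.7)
My plan is to combine the Whitney stratification of the Bloch variety $\cB$ recalled above with a stratified form of Sard's theorem.

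First I would localize the problem: it suffices to show that for each bounded interval $K\subset\RR$ the set $\sigma_0\cap K$ is finite. By the min-max principle applied to $P(\ell)$ on $H^1_{per}(W)$, one has $\inf_{\ell\in B}\lambda_n(\ell)\to+\infty$ as $n\to+\infty$, so only finitely many band functions $\lambda_n$ take values in $K$. Correspondingly, only finitely many strata of $\cB\cap\RR^3$ meet the compact set $(B\times K)\cap\RR^3$, by local finiteness of the Whitney decomposition. On the top stratum $\cB^r$ the projection $\pi:(\ell,\lambda)\mapsto\lambda$ is locally the real-analytic graph of some $\lambda_n$ over $\ell$, so its critical points are exactly the zeros of $\nabla_\ell\lambda_n$ there.

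Next, on each relevant real-analytic stratum $S$, the critical set $C_S$ of $\pi|_S$ is semi-analytic in $S$, hence so is its image $\pi(C_S)\subset\RR$. Sard's theorem gives that $\pi(C_S)$ has Lebesgue measure zero, and a semi-analytic subset of $\RR$ of measure zero is locally finite. The finite union $\bigcup_S\pi(C_S)\cap K$ therefore contains $\sigma_0\cap K$ and is finite.

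The main obstacle I expect is ensuring that the stratified notion of critical value really captures every point of $\sigma_0$, in particular at band crossings. Away from $\sigma_1$ the crossings are regular in the sense of \cite{KP}, so the band functions admit local real-analytic relabellings and the critical value analysis reduces to the top-stratum argument; at points of $\sigma_1$ one would have to handle genuine singularities of the analytic hypersurface $\cB$, which is precisely the reason those points are excluded throughout the paper.
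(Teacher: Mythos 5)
Your argument uses the same core strategy as the paper — Whitney stratification of the Bloch variety, local finiteness of strata over a compact piece, and then showing that critical values of the $\lambda$-projection on each stratum are locally finite — so this is essentially the paper's proof with a more uniform technical implementation. The paper handles the top stratum by citing a result (\cite{So}) that critical values of a single real-analytic function are isolated, and handles the one-dimensional strata by the elementary observation that a non-constant analytic function of one real variable has locally finitely many critical points; zero-dimensional strata contribute isolated values trivially. You instead apply Sard's theorem plus subanalyticity on every stratum at once, which is a valid and somewhat more systematic route (though note the image of a semi-analytic set under a proper analytic map is \emph{subanalytic} rather than semi-analytic in general; in $\RR$ this makes no difference since both classes give locally finite unions of points and intervals, and measure zero then forces local finiteness). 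Two small corrections to your final paragraph: (i) at a regular crossing the relabelled $\mu_n$ is analytic across the crossing, but a critical point lying \emph{on} the crossing sits in a lower-dimensional stratum, so it is captured by the critical-value argument for $\pi$ restricted to that lower stratum (as in the paper), not by the top-stratum argument as you wrote; (ii) the singular crossings of $\sigma_1$ pose no obstacle here — in dimension two they form zero-dimensional strata whose $\lambda$-images are automatically isolated, so they are covered by the stratified argument without any extra work. The reason $\sigma_1$ is excluded elsewhere in the paper is to allow analytic relabelling and deformation of the Fermi curves, not because those points cause a gap in this lemma.
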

\begin{proof}
Any stratum of $\cB^r$ is the graph of a unique analytic function $\lambda_n$ whose critical values are isolated by~\cite{So}. Any stratum $S$ of lower dimension is the graph of finitely many (generically two) crossing bands $\lambda_{j_1}=\ldots=\lambda_{j_n}$. Since $S$ is a manifold the restriction of $\lambda_k,\ k\in\{j_1,\ldots,j_n\}$ to the set $S$ is holomorphic and thus has at most a finite number of critical points. Finally 0 dimensional strata are isolated. Thus $\sigma_0$ is locally finite.
\end{proof}

We call \emph{singular stratum} a stratum of $\cB^{\times}$. Singular strata are in general position the sets of intersection of two bands $\lambda_{n_j},\ j=1,2$ which are simple outside the intersection set.

This is a finite dimensional problem for which we can use analyticity results about roots of hermitian matrices.

Let us proceed in details. First we consider the finite dimensional (matrix) reduction of $P(\ell)$ as follows. Since the spectrum of $P(\ell)$ is discrete and locally finite one can introduce the spectral projector on the finite dimensional vector space associated to a finite set of eigenvalues (cf. Kato~\cite{K} p.369 and 386). 
For $(\ell,\lambda_0)\in\cB$ with $\lambda_0$ a multiple eigenvalue, let us denote by $\pi(\ell,\lambda_0)$ the eigenprojection on the total eigenspace of $P(\ell)$ associated to the eigenvalue branches $\lambda_{n_j},\ j=1$ or $2$ in a neighborhood of $\lambda_0$. It reads
$$
\pi(\ell,\lambda_0)=\frac{1}{2i\pi}\int_{C(\lambda_0)}(P(\ell)-zI)^{-1}dz,
$$
where $C(\lambda_0)$ is a closed curve in the complex plane which encircles only $\lambda_0$ for $\ell=\ell_0$. Since $P(\ell)$ is an analytic familly of operators this projector is complex analytic on a small neighborhood of $\ell_0$.
Let us then set $T(\ell)=P(\ell)\pi(\ell,\lambda_0)$ which is a finite dimensional operator. Since $\pi$ is analytic $T$ is analytic too. 
Thus $T(\ell)$ reads as a $2\times 2$ hermitian matrix with complex analytic coefficients. We cannot use 1d Rellich's result~\cite{Re2} about the analytical continuation of eigenvalues of hermitian matrices. In this 2d case one needs to discuss the dimension of the crossing set
$$
\cM:=\{\lambda_{n_1}=\lambda_{n_2}\}\cap V_0
$$
in a neighborhood $V_0$ of $\ell_0$ (see~\cite{MeZu} paragraph 2.3 for a general discussion). Restricting ourself to real $\ell$ the authors in~\cite{KP} give a complete result extending~\cite{Re2} for the analytic continuation of roots of hermitian matrices. In our situation it can be reformulated according to the following
\begin{theo}[\cite{KP}]\label{lcone}
  Assume $\lambda_0\notin\sigma_0$. Either dim$\,\cM=1$ then $\lambda_{n_j}$ for $j\in\{1,2\}$ can be relabelled in such a way that they become (real) analytic functions on $V_0$ past the crossing. The same relabelling applies to the associated eigenvectors. Either dim$\,\cM=0$ and then $\ell_0$ is an isolated nodal point which is not a cusp and whose tangent cone lies outside a cone of slope max$\,\alpha/$min$\beta$.
\end{theo}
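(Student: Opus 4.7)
The plan is to exploit the finite-dimensional reduction already in place: on $V_0$ the operator $T(\ell) = P(\ell)\pi(\ell,\lambda_0)$ is an analytic family of Hermitian $2\times 2$ matrices (for real $\ell$). I would decompose
$$
T(\ell) = a(\ell) I + \begin{pmatrix}b(\ell)&c(\ell)\\ \bar c(\ell)&-b(\ell)\end{pmatrix}
$$
with $a,b$ real and $c=c_1+ic_2$ complex analytic on $V_0\cap\RR^2$. The eigenvalues are $\lambda_\pm(\ell) = a(\ell)\pm\sqrt{F(\ell)}$ with $F:=b^2+c_1^2+c_2^2\geq 0$ and $\cM=F^{-1}(0)\cap V_0$. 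Since $F$ is a nontrivial real analytic function on a 2-dimensional real parameter space (nontriviality following from $\lambda_0\notin\sigma_0$ together with the general position assumption), its zero set satisfies $\dim\cM\in\{0,1\}$, which already gives the claimed dichotomy.

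In the case $\dim\cM=1$, I would show that $\cM$ is a smooth real-analytic curve near $\ell_0$ and that $F$ factors as $F=g^2 h$, with $g$ a real-analytic defining function of $\cM$ and $h>0$ near $\ell_0$. This factorisation is the crucial algebraic content: because $b,c_1,c_2$ are three real-analytic functions with a common 1-dimensional zero set, Weierstrass preparation applied on the complexification forces each of them to be divisible by a single real-analytic generator of the ideal of $\cM$, and nonnegativity of $F$ upgrades this to the stated square divisibility. Then $\sqrt F=|g|\sqrt h$ and swapping $\lambda_+\leftrightarrow\lambda_-$ across $\cM$ defines a globally real-analytic pair $a\pm g\sqrt h$. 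The eigenvectors follow from the same factorisation: the rescaled spectral projector $(T-\lambda_+ I)/(g\sqrt h)$ extends analytically across $\cM$ and provides an analytic section of eigenvectors.

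In the case $\dim\cM=0$ the point $\ell_0$ is isolated in $\cM$ and the analysis reduces to the lowest-order jets. Let $a_1$ denote the linear part of $a-\lambda_0$ at $\ell_0$ and $Q$ the lowest-order homogeneous part of $F$; since $F$ does not vanish on any curve through $\ell_0$, $Q$ is a nonzero positive-semidefinite homogeneous polynomial strictly positive off the origin. The tangent cone to $\cB$ at $(\ell_0,\lambda_0)$ is then $\{(\delta\ell,\delta\lambda):(\delta\lambda-a_1(\delta\ell))^2=Q(\delta\ell)\}$, a genuine double cone with two transverse sheets (not a cusp) as soon as $Q$ is of degree exactly $2$, which follows from a Morse-type argument using $\lambda_0\notin\sigma_0$ (a higher-order vanishing of $F$ along some ray would make a branch of $\lambda_\pm$ critical at $\ell_0$). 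The slope bound is independent of the matrix analysis: applying Hellmann--Feynman to the Rayleigh quotient $\int_W\alpha|(\na+i\ell)u|^2dx/\int_W\beta|u|^2dx$ defining $P(\ell)$, the directional derivatives of any eigenvalue branch are bounded by $\max\alpha/\min\beta$, and this pointwise bound on the two branches meeting at $\ell_0$ forces the tangent cone to lie outside the stated cone.

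The main obstacle I anticipate is the factorisation step in Case 1: establishing that $\cM$ is smooth and that $F=g^2h$, rather than $\cM$ having a cuspidal or mixed-multiplicity structure along which the three squares conspire in a more subtle way. Ruling out such pathologies requires a careful analysis of the lowest-order jets of $b,c_1,c_2$ along $\cM$ and is exactly the content of Theorem 6.7 of~\cite{KP}; a self-contained treatment would reduce to a lemma about real-analytic curves in $\RR^2$ supporting a nontrivial sum of three squares with prescribed order of vanishing, but citing~\cite{KP} short-circuits that algebra and is why the theorem is quoted rather than reproved here.
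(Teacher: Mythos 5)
Your overall strategy matches the paper's: reduce to the analytic $2\times 2$ Hermitian family $T(\ell)=P(\ell)\pi(\ell,\lambda_0)$, write the eigenvalues as trace part $\pm\sqrt{F}$, split on $\dim\cM$, cite Kurdyka--Paunescu for the analytic relabelling when $\dim\cM=1$, and obtain the slope bound from Hellmann--Feynman via the paper's Lemma~\ref{lbband}. Your decomposition into a genuinely Hermitian form (with complex off-diagonal $c,\bar c$) is in fact a more careful version of the paper's representation~\eqref{roots}, which writes a \emph{real symmetric} matrix $\begin{pmatrix}a&b\\ b&c\end{pmatrix}$ --- a simplification that would require justification (a time-reversal symmetry or a choice of real basis) that the paper does not supply. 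Your more explicit sketch of the factorisation $F=g^2h$ in Case~1, together with your acknowledgement that the algebraic crux is exactly what~\cite{KP} provides, is consistent with the paper, which simply quotes~\cite{KP} Theorem~6.6 for this case without further elaboration.

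In Case~0, however, two of your intermediate assertions fail. First, $F>0$ off $\ell_0$ does \emph{not} imply that its lowest-order homogeneous part $Q$ is strictly positive off the origin: the paper's own example in Section~\ref{secnodal}, $F=\ell_1^2+\ell_2^{2m}$ with $m>1$, has $Q=\ell_1^2$ vanishing along $\{\ell_1=0\}$. Second, the Morse-type justification for $\deg Q=2$ is unsound: if $F$ vanishes to higher order along some ray, the directional derivatives of $\lambda_\pm$ along that ray agree with that of the trace part, which may well be nonzero, so $\lambda_0\notin\sigma_0$ does not force $Q$ to be a definite quadratic form. That said, the conclusion actually needed (the tangent set is homogeneous of degree one, hence $\ell_0$ is not a cusp) requires only $Q\not\equiv 0$, not $Q>0$; and it should be noted that the paper itself merely asserts that ``the minimal homogeneity degree of the roots is~1'' without proof, so the gap in your sketch mirrors one already present in the source rather than creating a new one.
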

\begin{proof}
  When $\cM$ is a subset of $ (\cB^{\times})^r$ this is~\cite{KP}, Theorem 6.6. When $\cM\subset\cB^{\times\times}$, upon reducing $V_0$, $\cM$ is the isolated point $\ell_0$ and the tangent space of $\lambda_{n_j}$ at $\ell_0$ is a cone which is a basic property of analytic spaces. More precisely let us give the matrix representation of $T(\ell)$:
\begin{multline}\label{roots}
T(\ell)-\lambda_0 I=\begin{pmatrix}a(\ell) & b(\ell) \\ b(\ell) & c(\ell)\end{pmatrix}=\frac{a(\ell)-c(\ell)}{2}+\begin{pmatrix}d(\ell) & b(\ell) \\ b(\ell) & -d(\ell)\end{pmatrix},\quad d(\ell)=\frac{a(\ell)+c(\ell)}{2}
\end{multline}
where $a,b,c$ are analytic functions satisfying $a(\ell_0)=b(\ell_0)=c(\ell_0)=0$. The eigenvalues of the last matrix are $\pm\sqrt{d^2+b^2}$ whose tangent set at $\ell_0$ is a (non isotropic) cone and $\ell_0$ is not a cusp because the minimal homogeneity degree of the roots is $1$. Finally by Appendix Lemma~\ref{lbband} the function $\lambda_n$ has a gradient bounded by max$\,\alpha/$min$\beta$.

\end{proof}

\begin{rem}
  The tangent cone about a nodal point can't be vertical and flat because otherwise $\lambda_0\in\sigma_0$.
\end{rem}

\begin{defi}\label{def1}
  The first case in the previous theorem will be referred to as \emph{regular crossing}. We denote by $\sigma_1$ the set of points $\ell$ corresponding to the second case of the theorem. If $k^2\notin \sigma_0\cup\sigma_1$ we call $F_{k^2}^0$ a \emph{regular Fermi level}. For such $k$ let us denote by $(\mu_n,v_n)$ the analytically reordered eigenfunctions and eigenvectors.
\end{defi}

Since $\lambda_n$ is defined in $\RR^2$ the function $\mu_n$ is piecewise defined on a subset $\cD$ of $\RR^2$ avoiding the set of preimages of critical points $\sigma_0$ and nodal points $\sigma_1$.

By analytic extension theorem $\mu_n$ extends analytically in a complex neighborhood of $\cD$. Since $\lambda_n$ are piecewise holomorphic in $\CC^2$, $\mu_n$ is thus still piecewise defined in term of $\lambda_n$ in a complex neighborhood of its domain of analyticity.
Contrary to $(\lambda_n,e_n)$ the functions $(\mu_n,v_n)$ are not periodic.

\subsection{Unfolding a regular real Fermi level}

For real $k$ the fiber F$_{k^2}^0$ is a real stratified set and upon taking supp$(\psi)$ small enough, the stratification remains invariant. Generically the fiber is a one dimensional set, with finitely many 1d and 0d strata: 1d strata are connected analytic curves $c_n$ and 0d strata are isolated points corresponding to the crossing of generically two band functions and thus the meeting point of two curves $c_n\cap c_j$. 

Let us now recall the well-known but non written fact that a Fermi level is actually the folding of closed or periodic analytic curves. To see this we make use of the extended Fermi level $F_{k^2}$ which is periodic with $B$ as a unit cell in $\RR^2$. Thus every $c_n\subset F_{k^2}^0$ is repeated by translation of periods of $B$ in $F_{k^2}$.
\begin{defi} 
Let $c$ be a connected curve in $F_{k^2}$. We say that $c$ is {\emph periodic modulo $B$} if $c$ has no boundary or if the vector joining the extremities of $c$ is a linear integer combination of the periods of $B$.
\end{defi}
\begin{lem}
There is a familly $(\tau_n)_n$ of translations by periods of $B$ such that the union over $n$ of the translated arcs $c_n\circ \tau_n$ can be concatenated into closed or periodic (modulo $B$) analytic curves $\Cc_j(k^2)$ of $F_{k^2}$.
\end{lem}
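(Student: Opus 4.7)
The plan is to build each $\mathcal{C}_j(k^2)$ as a maximal analytic continuation obtained by walking along the 1-dimensional strata of $F_{k^2}$ and translating arcs by lattice periods whenever we cross $\partial B$. First I would sort out the local picture. The regular Fermi level $F_{k^2}^0$ has finitely many 1-strata (the arcs $c_n$) whose endpoints lie either at 0-strata of $F_{k^2}^0$ in the interior of $B$ (crossing points, necessarily regular since $k^2\notin\sigma_1$) or on $\partial B$. Near a regular crossing, Theorem~\ref{lcone} provides analytic relabeled branches $\mu_{n_1},\mu_{n_2}$ whose level sets $\{\mu_{n_j}=k^2\}$ are analytic curves crossing transversally (non-tangentially, as the tangent cone is non-isotropic and $k^2\notin\sigma_0$). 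Thus through each crossing point there is an unambiguous analytic continuation along each of the two intersecting branches.

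Next I handle the boundary $\partial B$. If an arc $c_n$ reaches a point $p\in\partial B$, the periodicity $F_{k^2}=F_{k^2}^0+2\pi\mathbb{Z}^2$ ensures that $p+\tau\in\partial B$ for the lattice vector $\tau$ identifying opposite faces, and the analytic continuation of $c_n$ past $p$ coincides locally with $c_m+\tau$ for some arc $c_m\subset B$ starting at $p+\tau$. This is the concatenation rule: we then translate $c_m$ by $-\tau$ (or equivalently glue $c_n$ to $c_m+\tau$ by translating $c_n$ by $\tau$) to obtain analyticity across $\partial B$.

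The construction then proceeds iteratively. Starting from some arc, I follow it in both directions, at each interior crossing choosing the analytic continuation prescribed by $\mu_{n_j}$, and at each boundary endpoint translating the next arc by the appropriate lattice vector so the concatenation is analytic. Because there are only finitely many arcs in one period cell, the sequence of arcs visited must eventually repeat. When the first repetition occurs, either the running arc returns to its starting point with zero net translation — producing a closed analytic curve $\mathcal{C}_j$ — or it returns to the same arc but shifted by a nontrivial lattice vector $T_j$, in which case extending the construction indefinitely in both directions yields an analytic curve invariant under translation by $T_j$, i.e.\ periodic modulo $B$. Repeating on the remaining arcs exhausts all 1-strata and produces the family $\{\mathcal{C}_j(k^2)\}$.

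The main obstacle is the bookkeeping at crossing points and in the matching on $\partial B$: one must verify that the local analytic relabeling from Theorem~\ref{lcone} is compatible with the global gluing (so that following $\mu_{n_1}$ through a crossing and then translating across $\partial B$ still yields an analytic parametrization), and that every arc is indeed exhausted by this process without ambiguity. Transversality of the crossings (guaranteed by $k^2\notin\sigma_0\cup\sigma_1$) prevents branching ambiguities, while the translation choice at $\partial B$ is uniquely fixed by the two adjacent faces, so the construction is consistent; termination then follows purely from finiteness of the stratification inside $B$.
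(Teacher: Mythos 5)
Your argument follows essentially the same route as the paper's proof: continue a given arc analytically across interior crossings using the relabeled branches of Theorem~\ref{lcone}, continue across $\partial B$ by the lattice translation that matches the analytic branch, and then invoke finiteness of the arc set to conclude. The one point that deserves to be made explicit is the closing-up step. You write that when the sequence of visited arcs first repeats, the running curve either returns to its starting arc or to a lattice translate of it; but a priori the first repeated arc could be an \emph{intermediate} one, producing a ``$\rho$''-shaped set (a loop with a dangling tail) rather than a closed or periodic curve. The paper rules this out by observing that the continuation procedure is reversible: along the branch $\mu_n$, each arc has a unique analytic predecessor as well as a unique successor, so a repeated intermediate arc would have two distinct predecessors, a contradiction; hence the first repetition must occur at the initial endpoint $E_1$ modulo $B$. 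You implicitly rely on this uniqueness (``unambiguous analytic continuation'', ``uniquely fixed''), so the fix is a single sentence, but it is the crux of why the construction yields genuine loops or periodic curves rather than something degenerate.
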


\begin{proof}
See Figure~\ref{fig3}. Let us construct such a set $\Cc(k^2)$ and prove its analyticity. 

Let $c$ be a connected analytical component of $F_{k^2}^0$. It is related to some $\mu_n$. Either it is a closed curve in B and then $c=\Cc(k^2)$ or both end points $(E_1,E_2)$ belong to $\D B$. Since $\mu_n$ is piecewise defined in terms of the $(\lambda_n)_n$ we generically have $E_1\in\lambda_{n_1}^{-1}(k^2)$ and $E_2\in\lambda_{n_2}^{-1}(k^2)$ for some $n_1,n_2$ (which can be equal). Let $E_2'\in \D B$ such that $E_2E_2'$ is a period of $B$. 

Assume that $c$ is the only curve arriving at $E_2$. This means that $\lambda_{n_2}$ is analytic around $E_2$ and by periodicity there is an other curve $c'$ arriving at $E_2'$ such that translating it by $E_2'E_2$ it is an analytic continuation of $c$.

Assume that two curves $c,s$ meet at $E_2$. This means that $s$ is associated to $\lambda_{n_3}$ around $E_2$ and $\lambda_{n_2}$ and $\lambda_{n_3}$ meet at $E_2$. Then by periodicity of $\lambda_{n_2}$ and $\lambda_{n_3}$ there are two curves $c',s'$ arriving at $E_2'$. The choice of the good continuation is by analyticity. Indeed $\mu_n$ continues analytically through $E_2$ thus one of the translated $c',s'$ is associated to $\mu_n$ and thus an analytical continuation of $c$.

Repeating this procedure one gets a sequence of boundary points $E_1,E_2,$ $\ldots, E_j,\ldots$. We claim that the first redundance modulo $B$ must be $E_1$ modulo $B$. Indeed each new connected component which is added to $\Cc(k^2)$ under construction is a translated component of $F_{k^2}^0$. Moreover there is a finite number of such components.
If the first redundance is $E_j$ with $j>1$ then this would mean that there is some $B$-periodic subset $\Cc'(k^2)$ of $\Cc(k^2)$ which does not go through $E_1$ but this is wrong since we can reverse the procedure and see that $E_j$ comes from $E_1$.

\end{proof}

\begin{figure}[!h]
\begin{center}
\includegraphics[width=9cm,height=8cm]{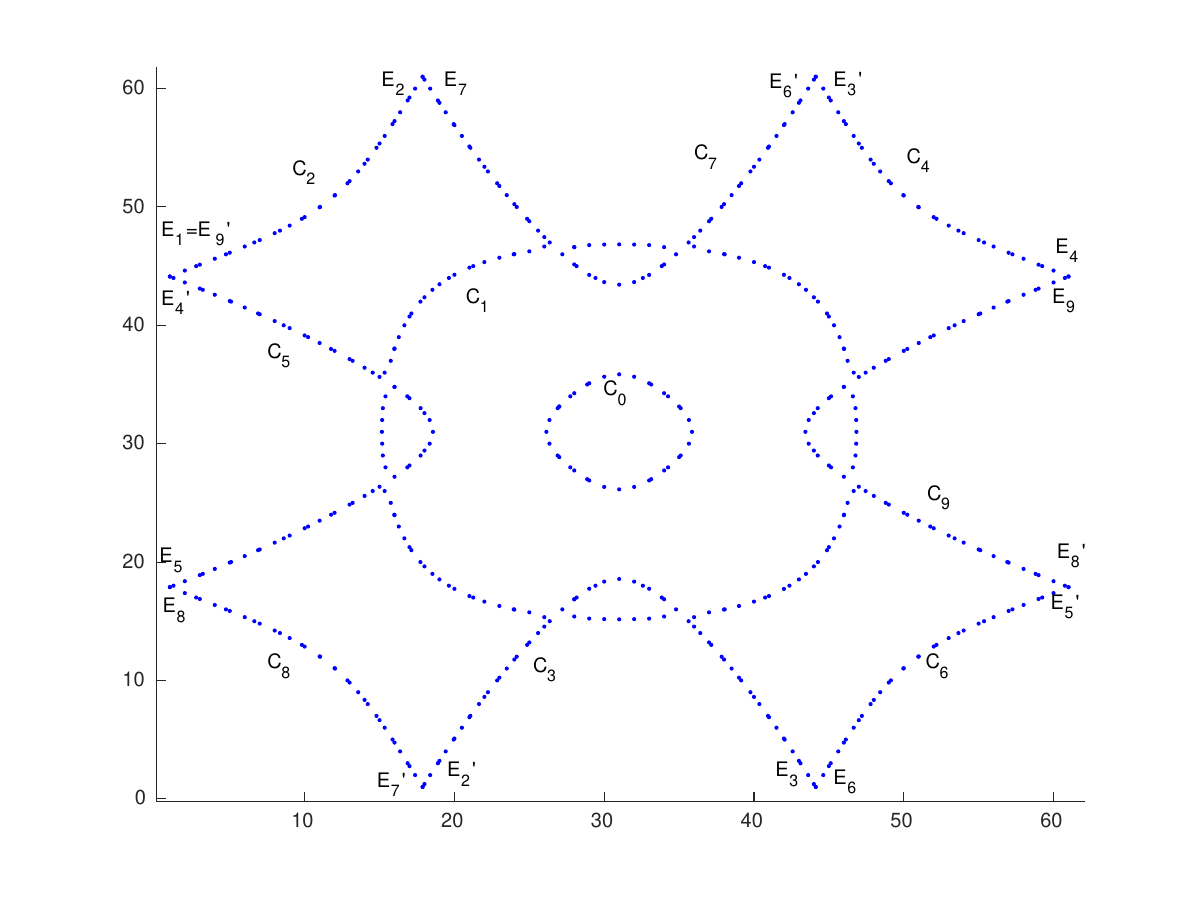}
\includegraphics[width=9cm,height=8cm]{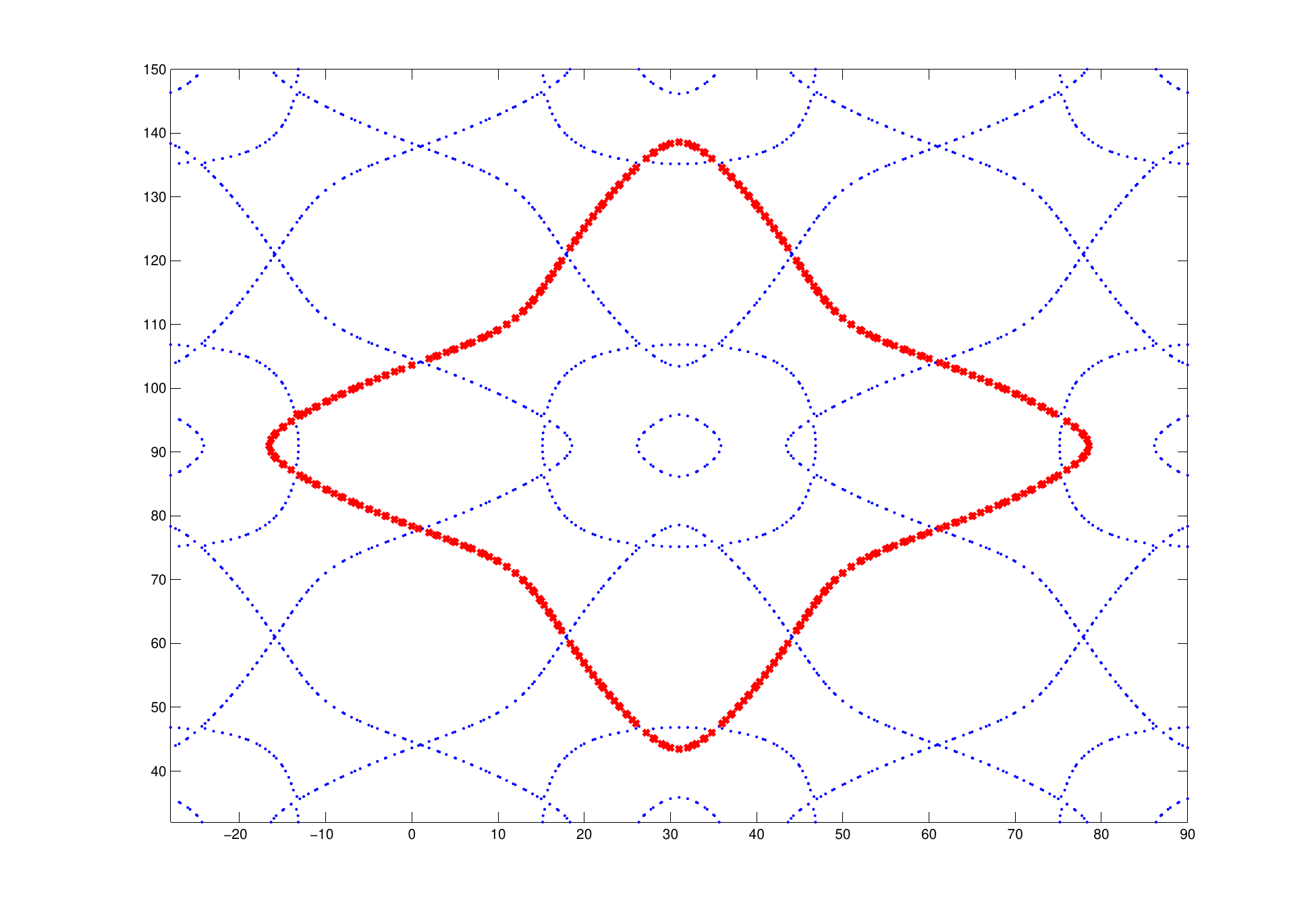}
\caption{Unfolding the Fermi level for $k=1.2$ (left) leads (right) to $\Cc_0(k^2)=c_0$, $\Cc_1(k^2)=c_1$ and $\Cc_2(k^2)=\displaystyle\cup_{2\leq j\leq 9}c_j$ using B-translations (red curve)}\label{fig3}
\end{center}
\end{figure}



\begin{rem}\hspace{1cm}
\begin{itemize}
\item From the proof we see that each $\Cc(k^2)$ is associated to one $\mu_j$ and we denote it by $\Cc_j(k^2)$.
\item On Figure~\ref{fig3} we only have closed curves in an extended Brillouin zone.
\item $\D\Cc_j(k^2)\neq \emptyset$ if there is a unit vector $\hat \imath$ such that the line $t\hat\imath,\ t\in\RR$ does not meet $F_{k^2}^0$. In other words $k^2$ lies in a partial gap of $P$ in the direction $\hat\imath$. In particular this may happen when bands overlap artificially.
\item The curves may cross or may be tangent (see Figure~\ref{fig4}) but this does not make any difference in the subsequent analysis since $k^2\notin\sigma_1$ and thus the crossing of different $\mu_j$ is regular.
  \end{itemize}
\end{rem}
\begin{figure}[!h]
\begin{center}
\includegraphics[width=9cm,height=8cm]{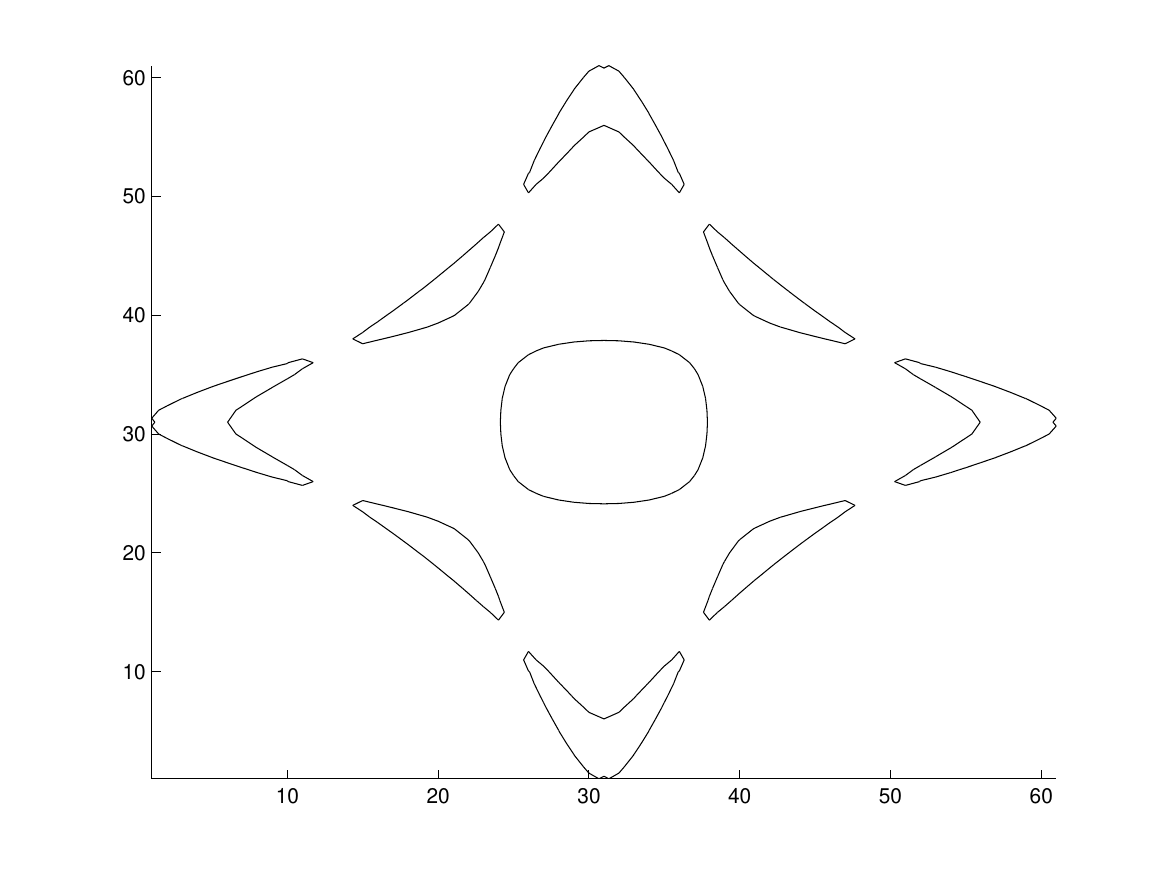}
\includegraphics[width=9cm,height=8cm]{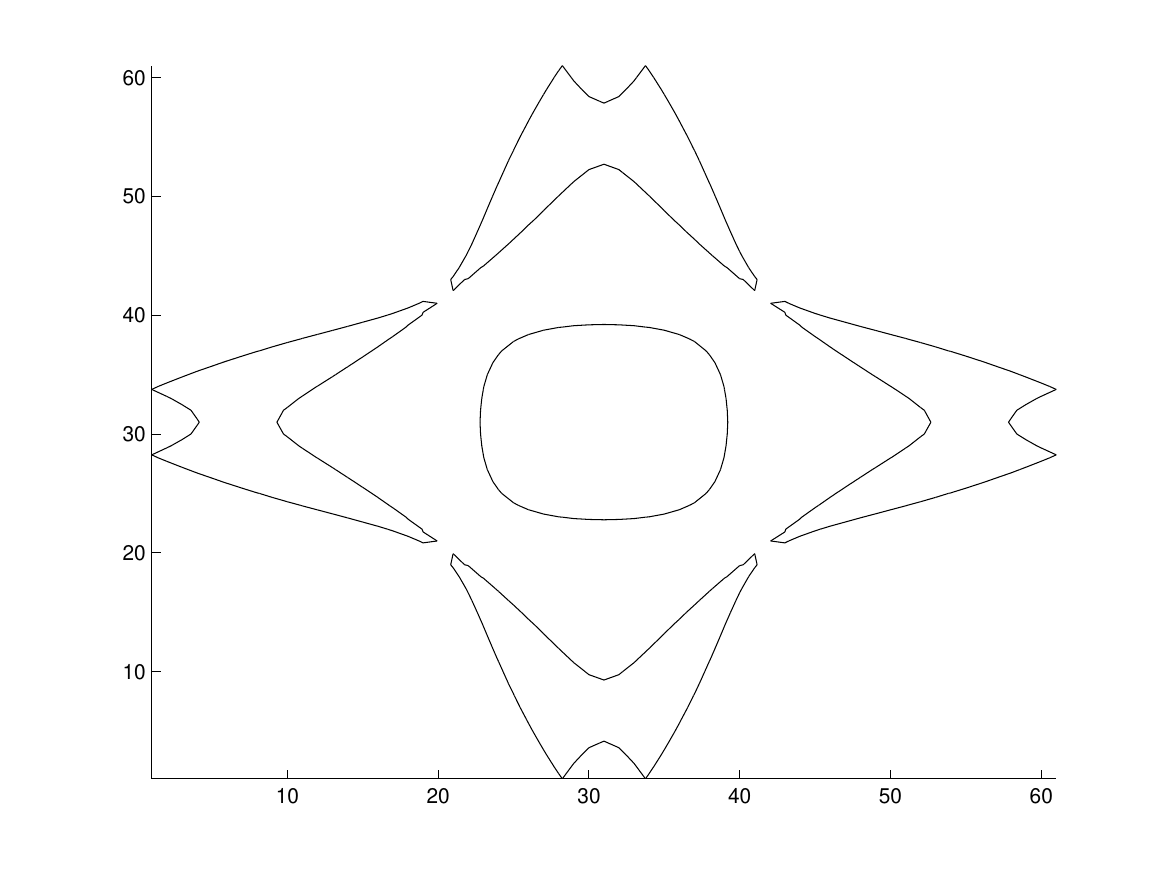}
\caption{Real Fermi level $F_{k^2}^0$ for $k=1.06$ (left) and $k=1.081$ (right, folded)}
\end{center}  
\label{fig4}
\end{figure}

\subsection{Complex extension of a regular real Fermi level}

In view of letting $k$ complex we want to extend the definition of $\Cc_j(k^2)$ to a complex neighborhood of $k^2$.

\begin{lem}\label{lparam}
Let $k\in\RR$ and let $\ell_{j,0}(t)$, $t\in[0,1]$ be a global (real) parameterization of a curve $\Cc_j(k^2)$ in $\RR^2$.
Then for complex $\lambda$ close to $k^2$ one defines $\Cc_j(\lambda)$ through a family of parameterizations $\ell_j(t,\lambda)$, $t\in[0,1]$ such that $\D\ell_j/\D\lambda=\na\mu_{j}/(\na\mu_{j}\cdot\na\mu_{j})$ and $\ell_j(t,k^2)=\ell_{j,0}(t)$ for all $t\in[0,1]$.
\end{lem}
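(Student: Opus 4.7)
The plan is to build $\lambda\mapsto\ell_j(t,\lambda)$ as the flow of a holomorphic vector field whose integral curves stay on the level sets of $\mu_j$. Because $k^2\notin\sigma_0\cup\sigma_1$, Theorem~\ref{lcone} and the discussion following Definition~\ref{def1} guarantee that the reordered band $\mu_j$ is a single-valued holomorphic function on some tubular complex neighborhood $\Omega\subset\CC^2$ of the real compact curve $\Cc_j(k^2)$. The condition $k^2\notin\sigma_0$ further gives $\na\mu_j\neq 0$ on $\Cc_j(k^2)$, so that the holomorphic bilinear form $g(\ell):=\na\mu_j(\ell)\cdot\na\mu_j(\ell)=\sum_k(\D_k\mu_j(\ell))^2$ coincides with $|\na\mu_j(\ell)|^2>0$ on the real curve and, by continuity, remains nowhere zero on a possibly smaller $\Omega$. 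The holomorphic vector field $V(\ell):=\na\mu_j(\ell)/g(\ell)$ is then well defined and uniformly Lipschitz on a compact neighborhood of $\Cc_j(k^2)$.

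For each $t\in[0,1]$ one then considers the complex Cauchy problem $d\ell/d\lambda=V(\ell)$ with $\ell(k^2)=\ell_{j,0}(t)$. The complex Picard theorem yields a unique holomorphic solution on some disk $|\lambda-k^2|<\rho(t)$, and compactness of $[0,1]$ together with the uniform Lipschitz bound on $V$ allows one to fix a single radius $\rho>0$ valid for all $t$. Calling $\ell_j(t,\lambda)$ this solution produces a map $[0,1]\times D(k^2,\rho)\to\CC^2$ which is continuous in $t$, holomorphic in $\lambda$, and satisfies $\D\ell_j/\D\lambda=\na\mu_j/(\na\mu_j\cdot\na\mu_j)$ by construction. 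To check that this curve actually parametrises $\Cc_j(\lambda)$, differentiate $\mu_j(\ell_j(t,\lambda))-\lambda$ in $\lambda$ using the chain rule:
$$
\frac{d}{d\lambda}\bigl[\mu_j(\ell_j(t,\lambda))-\lambda\bigr]=\na\mu_j(\ell_j)\cdot V(\ell_j)-1=\frac{g(\ell_j)}{g(\ell_j)}-1=0,
$$
so this quantity is constant in $\lambda$, and the initial condition $\mu_j(\ell_{j,0}(t))=k^2$ forces the constant to be zero. One then sets $\Cc_j(\lambda):=\{\ell_j(t,\lambda):t\in[0,1]\}$.

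If $\Cc_j(k^2)$ is closed, i.e.\ $\ell_{j,0}(0)=\ell_{j,0}(1)$, uniqueness of the Cauchy problem forces $\ell_j(0,\lambda)=\ell_j(1,\lambda)$, so the deformed curve is also closed. In the periodic-modulo-$B$ case the piece of $\mu_j$ near $\ell_{j,0}(0)$ and the piece near $\ell_{j,0}(1)$ are analytic continuations of one another up to a fixed lattice translation of $B$; hence the two Cauchy problems issued from these endpoints yield solutions differing by that same constant lattice vector, and the period is preserved by the deformation.

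The real technical point of the proof lies in the analytic setting: one has to be sure that $\mu_j$ truly admits a single-valued holomorphic continuation to a uniform tubular neighborhood of the whole compact curve $\Cc_j(k^2)$, including the regular crossings through which the band reordering takes place. This is exactly what the standing assumption $k^2\notin\sigma_0\cup\sigma_1$ delivers via Theorem~\ref{lcone} and the remarks immediately after Definition~\ref{def1}; once this is granted, the remainder of the argument is a direct application of the complex Picard theorem together with the elementary chain-rule identity above.
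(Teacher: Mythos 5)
Your proof is correct and follows essentially the same route as the paper's: apply Cauchy--Lipschitz with parameter $t$ to get a uniform disk of existence for the flow of $V=\na\mu_j/(\na\mu_j\cdot\na\mu_j)$, then verify via the chain rule that $\frac{d}{d\lambda}\mu_j(\ell_j(t,\lambda))=1$ so the initial level-set condition propagates. Your additional remarks --- that $\na\mu_j\cdot\na\mu_j$ is the holomorphic bilinear form (not $|\na\mu_j|^2$) and must be checked nonvanishing on a complex tube, and that closedness or periodicity modulo $B$ is preserved by uniqueness of the Cauchy problem --- are refinements the paper's terse proof leaves implicit, but they do not change the argument.
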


\begin{proof}
  First $\ell_j$ is well-defined because away from $\sigma_0\cup\sigma_1$ the function $\mu_j$ is analytic and $\na\mu_j(k^2)\neq 0$ so one can apply Cauchy-Lipshitz's Theorem with parameter $t$ and show that there exists a complex ball $\mathscr{B}_{k^2}$ independent of $t$ such that $\ell_j(t,\lambda)$ exists for $\lambda\in \mathscr{B}_{k^2}$.
      
Let us show that $\mu_{j}(\ell_j(t,\lambda))=\lambda$ for all $\lambda$ in $\mathscr{B}_{k^2}$. For this we just need to show that the initial parameterization is carried along the flow. We compute and find for all $t\in[0,1]$
$$
\frac{\D}{\D\lambda}\mu_{j}(\ell_j(t,\lambda))=1\quad\text{with}\quad \mu_{j}(\ell_j(t,k^2))=k^2.
$$
Thus $\mu_{j}(\ell_j(t,\lambda))=\lambda$ for any $\lambda$ in a small neighborhood of $k^2$ and $t\in[0,1]$.
\end{proof}


\subsection{Complex displacement about a regular $k^2$-Fermi level}\label{sec22}

\subsubsection{Representation formula}\label{sec221}

Let us resume the guideline presented just before section~\ref{sgeo}.
Let us pick a peculiar $k^2\notin\sigma_0\cup\sigma_1$ and denote by $K$ the support of $\psi$ which we take so small that the topology of $F_{\lambda}^0$ does not change for $\lambda\in K$.
The domain of integration of $u_{1,\eps}$ is $\Psi:=\bigcup_{n\in J_k}\{\ell\in B,\ \psi(\lambda_n(\ell))\neq 0\}$. This set also reads as a union of disjoint Fermi levels: $\Psi\cap\bigcup_{\lambda\in K}F_{\lambda}^0$. From the previous section we have:
$$
\bigcup_{\lambda\in K}F_{\lambda}^0 = \bigcup_j\bigcup_{\lambda\in K} \Cc_j(\lambda)\quad\text{modulo B}
$$
For future use let us denote by
$$
\cA_j:=\bigcup_{\lambda\in K} \Cc_j(\lambda),\quad\text{and}\quad \Fc=\bigcup_{\lambda\in K}F_{\lambda}^0=\bigcup_j\cA_j
$$
Since $\lambda_n$, $e^{i\ell\cdot x} e_n(x,\cdot)$ and $(\hat f(\cdot), e_n(\cdot))_{L^2(W)}$ are $B$-periodic and recalling~\eqref{pw} we get 
\begin{align}\label{eqdeploy}
  u_{1,\eps}&=\sum_{n\in J_{k}}\int_Be^{i\ell\cdot x}\psi(\lambda_n(\ell))\frac{(\hat f,e_n(\ell))_{L^2(W)}}{k_{\eps}^2-\lambda_n(\ell)}e_n(x,\ell)d\ell\nonumber\\
  &=\int_{\Fc}e^{i\ell\cdot x}\psi(\tilde\lambda(\ell))\frac{(\hat f,\tilde e(\ell))_{L^2(W)}}{k_{\eps}^2-\tilde\lambda(\ell)}\tilde e(x,\ell)d\ell, \nonumber\\
  &=\sum_jw_j\quad\text{with}\quad w_j=\int_{\cA_j}e^{i\ell\cdot x}\psi(\tilde\lambda(\ell))\frac{(\hat f,\tilde e(\ell))_{L^2(W)}}{k_{\eps}^2-\tilde \lambda(\ell)}\tilde e(x,\ell)d\ell.
\end{align}

Since $\Cc_j(\lambda)$ is associated to one function $\mu_j$ one has $\tilde e=v_{j}$ (recall definition~\ref{def1}).
Then let us use the explicit parameterization $\ell_j$ of $\Cc_j(\lambda)$ according to Lemma~\ref{lparam} and compute its Jacobian determinant (here $\lambda$ is real). First from $\mu_j(\ell_j(t,\lambda))=\lambda$ we have $\na\mu_j\cdot\D_t\ell_j=0$ and $\na\mu_j\cdot\D_{\lambda}\ell_j=1$ hence
$$
|det(\D_t\ell_j\ \D_{\lambda}\ell_j)|=\frac{|\D_t\ell_j|}{|\na\mu_j|}\na\mu_j\cdot\D_{\lambda}\ell_j=\frac{|\D_t\ell_j|}{|\na\mu_j|}.
$$
Finally
$$
w_j=\int_K\frac{\psi(\lambda)}{k_{\eps}^2-\lambda}\int_{\Cc_j(\lambda)}\frac{e^{i\ell\cdot x}}{|\na\mu_j|}(\hat f(\ell),v_j(\ell))_{L^2(W)}v_j(\ell,x)d\ell d\lambda.
$$

\medbreak

{\bf In what follows we consider any $w_j$ so we drop the index $j$}.

\medbreak



\subsubsection{Light area and shadow}

Before pushing $\lambda$ to the complex plane note that $e^{i\ell(t,\lambda)\cdot x}$ is a quasi-periodic function with respect to $x$. When $\lambda$ is complex then $\ell$ is complex and the sign of
$$
\vp(t,\lambda):=\ell(t,\lambda)\cdot x
$$
can change along $\Cc(\lambda)$ and thus the asymptotic behaviour of $w$ changes drastically. We thus need to caracterize the zeros of this function for $\lambda$ in a small neighborhood of $k^2$.

For complex $\lambda=\tau+iy$ with $y$ close to zero we use Cauchy-Riemann relations to deduce the sign of $\Im\vp$.
Since the parameterization $\ell(t,\cdot)$ is holomorphic and real for $y=0$ the sign of $\Im\vp$ for $y$ going to zero is given by the sign of $\D_y\Im\ell\cdot x$ which is also the sign of $\D_{\tau}\ell\cdot x$ for $y=0$. Now by definition of the parameterization $\ell$ one has $\D_{\tau}\ell\cdot x=(\na\mu\cdot x)/|\na\mu|^2$ for $y=0$ thus the sign of $\Im\vp$ when $y$ is close to zero is given by that of $\na\mu\cdot x$. 

With the homogeneous case in mind we want to reproduce the proof of the asymptotic expansion as in~\cite{Mel}. Thus we split $\Cc(k^2)$ in three parts :
\begin{itemize}
\item a part $\Cc_0$ around the shadow transition (i.e. $\na\mu(\ell(t,k^2))\cdot x\approx0$)
\item and two other parts $\Cc^{\pm}$ such that $\pm\Im\phi>0$ for $\eps>0$ or equivalently such that $\pm\na\mu(\ell(t,k^2))\cdot x>0$.
\end{itemize}
Because the set $\psi=1$ is small we perform this splitting uniformly with respect to $\lambda\in K$.



Let us introduce a partition of unity on $\Cc(k^2)$:
$$
1=\psi_0(t)+\psi_+(t)+\psi_-(t)\quad\text{with}\ \pm\na\mu\cdot x>0\quad\text{on}\quad \text{supp}(\psi_{\pm}).
$$
So $w=w_0+w_+(x)+w_-(x)$ with for $\beta\in\{0,\pm\}$
\begin{equation}\label{3bit}
w_{\beta}=\int_K\frac{\psi(\lambda)}{k_{\eps}^2-\lambda}g_{\beta}(\lambda)d\lambda,\quad g_{\beta}(\lambda)=\int_0^1e^{i\ell\cdot x}\psi_{\beta}(t)\,(\hat f(\ell),v(\ell))_{L^2(W)}v(\ell,x)\frac{|\D_t\ell|}{|\na\mu|}dt.
\end{equation}

\subsubsection{Complex displacement}

We now consider each term $w_{\beta}$ and isolate the leading part.

\medbreak

$\bullet$ For $w_+$ we can choose the integration path in the first integral going above the residue. For this let $I\subset K$ be such that  $\psi=1$ on $I$. Then let us consider $\gamma^+$ a curve homotopic to $I$, encircling
$k_{\eps}^2$ for $\eps$ small enough. See Figure~\ref{fig5}.

\begin{figure}[!h]
\begin{center}
\includegraphics[width=10cm,height=3cm]{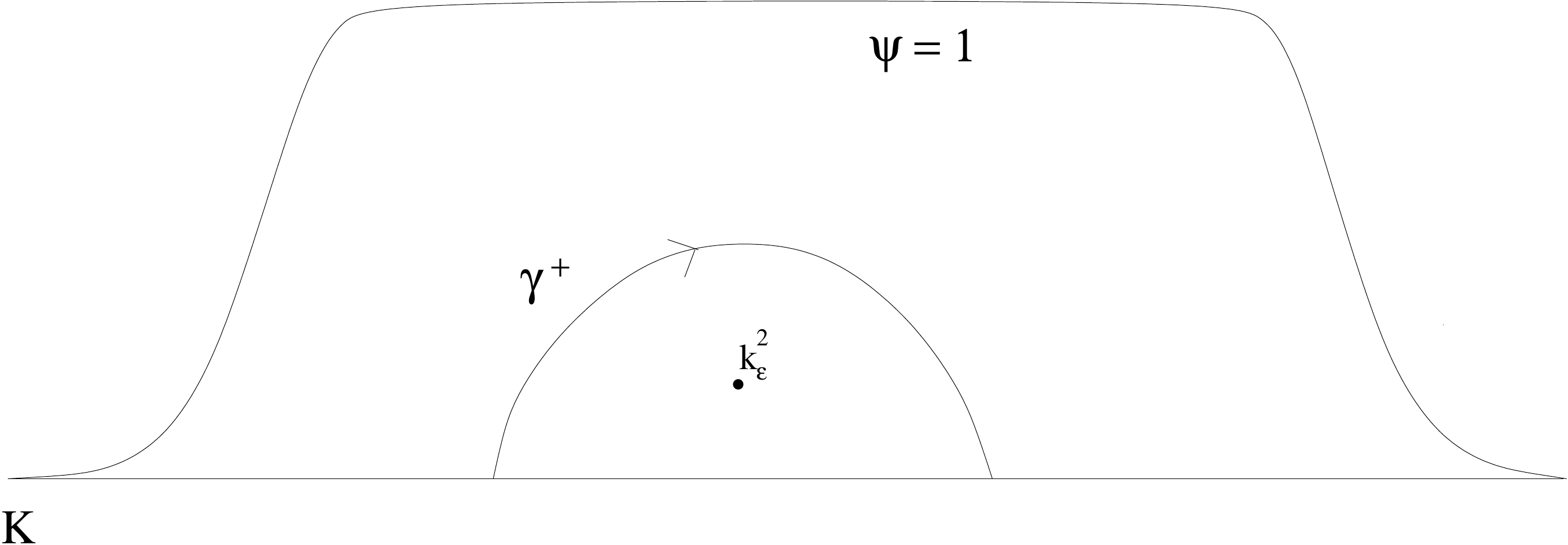}
\caption{The contour $\gamma^+$}
\end{center}  
\label{fig5}
\end{figure}

Then the integrand $g_+$ is holomorphic in the region surrounded by $\gamma^+$ and $I$. Indeed, $\psi=1$ and Lemma~\ref{lparam} provides a holomorphic extension of the jacobian determinant in a neighborhood of $K$ which reads $\pm$det$(\D_t\ell,\D_{\lambda}\ell)$ (with fixed sign on $K$). By the Cauchy-residu formula we get
\begin{equation}\label{Resout}
w_+=2i\pi g_+(k_{\eps}^2)+\int_{K\setminus I}\psi(\lambda)\frac{g_+(\lambda)}{k_{\eps}^2-\lambda}d\lambda+\int_{\gamma^+}\frac{g_+(\lambda)}{k_{\eps}^2-\lambda}d\lambda
\end{equation}
where
$$
g_+(k_{\eps}^2)=\int_{\Cc\cap F_{k^2}^+(x)}\psi_+e^{i\ell\cdot x}(\hat f(\ell),v(\ell))_{L^2(W)}v(\ell,x)\frac{1}{|\na\mu|}\sigma(d\ell).
$$
$\bullet$ Similarly, for $w_-$ we choose a path $\gamma^-$ going below the real axis so that there is no residu:
$$
w_-(x)=\int_{K\setminus I}\psi(\lambda)\frac{g_-(\lambda)}{k_{\eps}^2-\lambda}d\lambda+\int_{\gamma^-}\frac{g_-(\lambda)}{k_{\eps}^2-\lambda}d\lambda.
$$
The previous expressions show that $w_{\pm}$ have limit when $\eps$ goes to zero and $g_+(k^2)$ contributes to formula~\eqref{f1}.
\medbreak

$\bullet$ As for $w_0$ one can take the limit when $\eps$ goes to zero. This limit involves a principal value:
$$
\lim_{\eps\rightarrow 0}\frac{1}{k_{\eps}^2-\lambda}=vp\left(\frac{1}{k_0^2-\lambda}\right)-i\pi\delta_{k_0^2}.
$$ 
The limit thus reads
$$
\lim_{\eps\rightarrow 0} w_0=-i\pi g_0(k_0^2)+vp\int_K\frac{\psi(\lambda)}{k_0^2-\lambda}g_0(\lambda) d\lambda,
$$
where the principal value is bounded.

\section{Boundedness of the residual}

\begin{theo}
The functions $w_0$, $w_+-2i\pi g_+(k_{\eps}^2)$ and $w_-$ belong to $H^1(\RR^2)$ uniformly with respect to $\eps$ (small).
\end{theo}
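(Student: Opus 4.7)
The unifying tool is the Bloch--Plancherel isomorphism recalled in Appendix~\ref{A1}: $h\in H^1(\RR^2)$ iff $\hat h\in L^2(B;H^1(W))$ with equivalent norms. The plan is to exhibit, for each of the three quantities, a Bloch-transform representative bounded in $L^2(B;H^1(W))$ uniformly in small $\varepsilon>0$.

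For $w_-$ and $w_+-2i\pi g_+(k_\varepsilon^2)$, the representation~\eqref{Resout} decomposes each into a real integral over $K\setminus I$ and a complex contour integral over $\gamma^\pm$; on both pieces $|k_\varepsilon^2-\lambda|$ admits a lower bound uniform in $\varepsilon$ (the distance from $K\setminus I$ to $k^2$ is positive and $\gamma^\pm$ sits at a fixed distance from $k_\varepsilon^2$ by construction). The change of variables $(t,\lambda)\mapsto\ell=\ell(t,\lambda)$ of Lemma~\ref{lparam} (Jacobian $|\D_t\ell|/|\na\mu|$) rewrites the real piece as an inverse Bloch transform
\begin{equation*}
\int_{\cA_\pm^{(K\setminus I)}}\frac{\psi(\mu(\ell))\psi_\pm(\ell)}{k_\varepsilon^2-\mu(\ell)}(\hat f,v(\ell))_{L^2(W)}\,e^{i\ell\cdot x}v(\ell,x)\,d\ell
\end{equation*}
with uniformly bounded symbol; combined with the uniform $H^1(W)$-control on normalized eigenvectors $v(\ell,\cdot)$ supplied by elliptic regularity on the bounded spectral window $K$, this yields the desired $L^2(B;H^1(W))$-bound. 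The complex contour part is handled by the same change of variables, producing integration over a 2-surface $\Sigma_\gamma^\pm\subset\CC^2$; using the holomorphic extension of $\mu,v,\ell$ in a tubular neighborhood of $\cA_\pm$ (Lemma~\ref{lparam}) together with Cauchy's theorem (the residue at $k_\varepsilon^2$ having been subtracted already in the case of $\gamma^+$), one deforms $\Sigma_\gamma^\pm$ back into the real Brillouin zone and reduces this piece to the real one.

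For $w_0$ the denominator has no uniform lower bound. The idea is to write
\begin{equation*}
w_0=\int_K\psi(\lambda)\frac{g_0(\lambda)-g_0(k_\varepsilon^2)}{k_\varepsilon^2-\lambda}d\lambda+g_0(k_\varepsilon^2)\int_K\frac{\psi(\lambda)}{k_\varepsilon^2-\lambda}d\lambda.
\end{equation*}
The scalar second factor is uniformly bounded in $\varepsilon$, its limit being $\mathrm{vp}\int\psi/(k^2-\lambda)d\lambda-i\pi\psi(k^2)$. The first integral has the smooth divided-difference integrand $-\int_0^1(\D_\lambda g_0)(k_\varepsilon^2+s(\lambda-k_\varepsilon^2))\,ds$ and falls under the same Bloch--Plancherel treatment as above. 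It remains to bound $g_0(k_\varepsilon^2)$ in $H^1(\RR^2)$ uniformly: although a 1d Fermi-curve integral, the defining property of $\psi_0$ (that $\na\mu\cdot x$ is small on $\mathrm{supp}(\psi_0)$) renders the phase $\ell(t,k^2)\cdot x$ stationary in $t$, and an integration by parts in $t$ converts the 1d representation into a 2d-type Bloch inverse transform with bounded symbol.

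The principal obstacle is precisely this last step: the cutoff $\psi_0$ depends on $x$ through $\theta=\arg x$, and the integration by parts in $t$ must absorb derivatives $\D_x\psi_0$. These contribute acceptably because on $\mathrm{supp}(\psi_0)$ they are paired with the near-stationary phase factors, and the shadow transition region can be made arbitrarily narrow; but the precise bookkeeping is delicate and is what makes this the technical heart of the theorem.
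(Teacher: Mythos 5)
Your treatment of $w_\pm$ contains a genuine gap. The plan to "deform $\Sigma_\gamma^\pm$ back into the real Brillouin zone and reduce this piece to the real one" is circular: the contour $\gamma^+$ is chosen precisely so that the pole at $k_\eps^2$ sits between $\gamma^+$ and the real interval $I$. Cauchy's theorem trades $\int_{\gamma^+}$ for $\int_I$ only at the cost of re-inserting the residue you just subtracted, and $\int_I\psi(\lambda) g_+(\lambda)/(k_\eps^2-\lambda)\,d\lambda$ is exactly the term whose $\eps$-uniformity was in question to begin with. Worse, the reduction hides the real obstruction: on $\gamma^+$ the parameterization $\ell(t,\lambda)$ is genuinely complex, so $e^{i\ell\cdot x}$ carries a factor $e^{-\Im\ell\cdot x}$ which can grow exponentially in $|x|$. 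The paper confronts this head-on: on $\mathrm{supp}(\psi_\pm)$ the phase is non-stationary in $\lambda$ (not in $t$), so it integrates by parts twice in $\lambda$, and then the key Lemma~\ref{lfin} bounds $\|\D_\ell^k pf\|_{L^1}$ by $\|\langle\cdot\rangle^{k+1}e^{\max|\Im\phi||\cdot|}f\|_{L^2}$. The conclusion of the paper's argument is that $w_\pm\in H^1$ uniformly \emph{provided $f$ decays exponentially at a rate tied to the height of $\gamma^+$}; your "uniformly bounded symbol" claim simply fails without this hypothesis, which you do not mention.

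Your divided-difference decomposition of $w_0$ is a sensible alternative idea, but as stated it also breaks down. The integrand $-\int_0^1(\D_\lambda g_0)(k_\eps^2+s(\lambda-k_\eps^2))\,ds$ is not a bounded Bloch symbol: $\D_\lambda$ acting on the phase $e^{i\ell(t,\lambda)\cdot x}$ produces a factor $ix\cdot\D_\lambda\ell$ growing linearly in $|x|$, so the Bloch--Plancherel argument does not close as written. The paper instead keeps $w_0$ as a double $(t,\lambda)$-integral and exploits that $\D_t\vp$ is bounded away from zero on $\mathrm{supp}(\psi_0)$ (there $\na\mu\cdot x\approx 0$, and since $\na\mu\perp\D_t\ell$ one gets $\D_t\ell$ nearly collinear with $x$); two integrations by parts in $t$ yield a $|x|^{-2}$ factor, and the $\D_\ell^kpf$ estimates of the auxiliary lemma give the $L^2$ bound. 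The $H^1$ bound is then obtained not by differentiating under the integral but via $\|\sqrt\alpha\na w_0\|_2^2=(Pw_0,w_0)_2\le\max_\ell\mu(\ell)\|w_0\|_2^2$; this is what dissolves the worry you raise about $\D_x\psi_0$, since the $t$-integration by parts is performed at fixed $x$ and no derivative in $x$ or $\theta$ is ever taken under the integral. Your instinct that the non-stationarity of the phase in $t$ on $\mathrm{supp}(\psi_0)$ is the operative mechanism is correct, but the route through a separate estimate of $g_0(k_\eps^2)$ and the claim that it becomes a "2d-type Bloch inverse transform" after one integration by parts is left unsubstantiated.
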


\begin{proof}
Let us first consider $w_0$ and set 
\begin{equation}\label{notproj}
pf(\ell,x)=(\hat f(\ell),v(\ell))_{L^2(W)}v(\ell,x)\quad\text{and}\quad D_{\ell}:=\pm\det(\D_t\ell,\D_{\lambda}\ell).
\end{equation}
The choice of $\pm$ is such that $D_{\ell}>0$ for real $\lambda$. Formula~\eqref{3bit} reads
$$
w_0 = \int_K\int_0^1 e^{ix\cdot\ell}\psi_0(t)\frac{\psi(\lambda)}{k_{\eps}^2-\lambda} pf(\ell,x)D_{\ell}\,dt d\lambda.
$$
In view of letting $|x|$ go to infinity Let us redefine the phase as $\vp=\ell(t,\lambda)\cdot x/|x|$. It is real and instationnary. Indeed, differentiating the relation $\mu(\ell(t,r))=r$ with respect to $t$ we get $\na\mu\cdot\D_t\ell=0$. Recalling that $\ell$ is defined by $\D_{\lambda}\ell=\na\mu/|\na\mu|^2$ we thus get
\begin{equation}\label{orthogeo}
\D_t\ell\cdot\D_{\lambda}\ell=0.
\end{equation}
Thus the gradient of the phase $\vp$ does not vanish anywhere. Actually on the support of $\psi_0$ we have $\D_t\vp\neq 0$ since $\na\mu(\ell)$ and $x$ are approximately orthogonal thus $\D_t\ell$ and $x$ are approximately collinear. We can thus integrate by parts with respect to $t$.

Let us note that the amplitude is also oscillatory since it is quasi periodic with respect to $\ell$. Integrating by parts requires to differentiate $\hat f$ with respect to $\ell$ and since $\D_{\ell}\hat f=-i\wh{xf}$ we need to take $f$ in weighted $L^2$ space or even in the Schwarz space if one wants to get a full series expansion of $w_0$ in inverse powers of $|x|$.
  
In order to prove that $w_0\in L^2(\RR^2)$ we need to integrate twice by parts because $|x|^{-1}\notin L^2(\RR^2\setminus B(0,1))$. Because $\psi_0$ has compact support we get
$$
w_0=\int_{\RR^+}\frac{\psi(r)}{k_{\eps}^2-r}\int_0^1\frac{-1}{|x|^2}e^{i|x|\vp}\left(\D_t\frac{1}{\D_t\vp}\right)^2\left(\underbrace{\psi_0(t) pf(\ell,x)D_{\ell}}_{h}\right) dt dr.
$$
The second derivative $(\D_t\frac{1}{\D_t\vp})^2h$ expresses as a sum of terms of the form
$$
\frac{G(t)(\D_t^a\vp)^b\D_t^c pf\D_t^dD_{\ell}}{(\D_t\vp)^e},\quad (a,b,c,d,e)\in\NN^5\quad b+c+d\leq 2,\quad a\leq 3,\ e\leq 4, 
$$
where $G$ is bounded. Since $\ell$ is analytic with respect to $t$, $\D_t^a\vp$ and $\D_t^dD_{\ell}$ are bounded and $\D_t\vp$ is lower bounded as we explained above. The only term left is $\D_t^c pf$ for which one needs to estimate $\D_{\ell}^cpf$. More precisely, there is a constant $\kappa$ such that
$$
|x|^2|w_0|\leq \kappa\|pf(\cdot,x)\|_{W^{2,1}(\cD)},\quad\text{where}\quad \cD=\bigcup_{\lambda\in K}\Cc(\lambda).
$$
\begin{lem}
There is a periodic function $q>0,\ q_{|_W}\in L^2(W)$ such that for any $k\in\NN$
$$
\|\D_{\ell}^k pf(x,\cdot)\|_{L^1(\cD)}\leq q(x)\|\langle x\rangle^k f\|_{L^2(\RR^2)},\qquad \langle x\rangle=\sqrt{1+|x|^2}.
$$
\end{lem}
See the proof of the lemma~\ref{lfin} below dealing with a complex extension of this result.

From the previous estimate we finally get the existence of a positive constant $c$ such that
$$
\|w_0\|_{L^2(\RR^2)}\leq c\|\langle x\rangle^2f\|_{L^2(\RR^2)}.
$$
Next, estimating $\|\na w_0\|_2$ is equivalent to estimating $\|\sqrt{\alpha}\na w_0\|_2$ but
$$
\|\sqrt{\alpha}\na w_0\|_2^2=(Pw_0,w_0)_2\leq\max_{\ell\in B}\mu(\ell)\|w_0\|_2^2
$$
since $P$ commutes with the integrals and $Pv = \mu v$. 

\bigbreak

Let us now turn to $\tilde w_+=w_+-2i\pi g_+$ and $w_-$. So we consider
$$
\tilde w_+=\int_{\gamma^+\cup K\setminus I}\int_0^1e^{ix\cdot\ell}\psi_+(t)\frac{\psi(\Re\lambda)}{k_{\eps}^2-\lambda} pf(\ell(t,\lambda),x)D_{\ell}dt d\lambda.
$$
The main change is due to complex values and the fact that on the support of $\psi_+$ the phase $\vp=\ell\cdot x/|x|$ is not instationnary with respect to $t$ but it is instationnary with respect to $\lambda$. Indeed the real part of the phase for $\lambda\in\gamma^+$ is a small perturbation of its values for $\lambda\in I$ and for such real $\lambda$ the vector $\D_{\lambda}\ell$ is collinear to $\na\mu$ which is not orthogonal to $x$.
 
Let us remark that when the phase takes negative real values these are of order of the imaginary part of $\gamma^+$ which is small since we need to remain in the domain of analyticity of $\mu$. So we do not use the exponential decrease but the nonstationarity of the phase. To integrate by parts we just need to provide enough regularity by choosing the contour $\gamma^+\cup(K\setminus I)$ smooth enough. Integrating twice by parts with respect to $\lambda$ we get
$$
\tilde w_+= -\int_{\gamma^+\cup(K\setminus I)}\int_0^1\frac{1}{|x|^2}e^{-i|x|\vp}\psi_+(t)\left(\D_{\lambda}\frac{1}{\D_{\lambda}\vp}\right)^2\frac{\psi(\lambda)}{k_{\eps}^2-\lambda} pf(\ell(t,\lambda),x)D_{\ell}dt d\lambda
$$
Let $\lambda(s),\ s\in I$ be a parameterization of the curve $\gamma^+\cup(K\setminus I)$ and let $\phi(t,s)=\ell(t,\lambda(s))$. Also let $\Cc^+(\lambda)=\Cc(\lambda)\cap\{\psi^+\neq 0\}$. Then $\phi$ is a diffeomorphism from $\Delta:=[0,1]\times K$ to $\bigcup_{\lambda\in\gamma^+}\Cc^+(\lambda)$ because $\Im\lambda$ is small and for $\lambda=Id$ it is so by~\eqref{orthogeo}. Then we can estimate $\tilde w_+$ by
$$
|x|^2|\tilde w_+(x)|\leq \frac{c}{\alpha^3}\| pf(\phi,\cdot)\|_{W^{2,1}(\Delta)},\quad \alpha=\text{dist}(\gamma^+,k_{\eps}^2).
$$
\begin{lem}\label{lfin}
There is a periodic function $q>0,\ q\in L^2(W)$ and a constant $c>0$ such that
$$
\|\D_{\ell}^k pf(x,\phi)\|_{L^1(\Delta)}\leq q(x)\|<\cdot>^{k+1} e^{\max|\Im\phi||\cdot|}f\|_{L^2(\RR^2)}.
$$
\end{lem}
\begin{proof}
Let us show the lemma for $k=0$ (for bigger $k$ the proof is similar because $\D_{\ell}\hat h=-i\widehat{xh}$). $e$ being holomorphic and periodic on $\Delta$ it is bounded in a small complex neighborhood of the Brillouin zone. Hence $v$ is bounded too and $pf$ can be estimated by
$$
\| pf(x,\phi)\|_{L^1(\Delta)}\leq \|v(x,\phi)\|_{L^2(\Delta)}\|v(\cdot,\phi)\|_{L^{\infty}(\Delta;L^2(W))}\|\hat f(\phi)\|_{L^2(W\times\Delta)}.
$$
Since $v$ is holomorphic with respect to $\ell$ it is bounded in $\Delta$ so that the first term is a $L^2(W)$ periodic function of $x$. For the second term notice that $\|e(\ell)\|_{L^2(W)}$ (hence $\|v(\ell)\|_{L^2(W)}$) is identically equal to $1$ on the real line and continuous by Lebesgue continuity theorem.

Finally we give a crude estimate of $\|\hat f(\phi)\|_{L^2(W\times\Delta)}$ as follows. First
$$
|\hat f(\phi)|\leq\sum_{n\in\ZZ^2}|f(x+2\pi n)|e^{\max|\Im\phi||x+2\pi n|}=\wh{|f|h}(0)\quad\text{where}\quad h(x)=e^{\max|\Im\phi||x|}.
$$
Then setting $g=|f|h$ one has 
$$
  \|\hat g(0)\|_{L^2(W)}^2=\sum_{n,m}\int_Wg(x+2\pi n)g(x+2\pi m)\leq \left(\sum_n\|g\|_{L^2(W+2\pi n)}\right)^2
$$
and
$$
\sum_n\|g\|_{L^2(W+2\pi n)}\leq c\sum_n\frac{\|\langle\cdot\rangle g\|_{L^2(W+2\pi n)}}{<n>}.
$$
Now applying Cauchy-Schwarz inequality the last term is bounded by $c\pi^2/6\|\langle\cdot\rangle g\|_{L^2(\RR^2)}^2$. Finally $\|\hat f(\phi)\|_{L^2(W\times\Delta)}\leq\mu(I)c\pi^2/6\|\langle\cdot\rangle g\|_{L^2(\RR^2)}^2$ where $\mu(I)$ is the measure of $\Delta$.

\end{proof}
From the previous estimate and since $\phi$ is a diffeomorphism one has $\max_D|\Im\phi|\leq c\alpha$ hence
$$
\|\tilde w_+\|_{L^2(\RR^2)}\leq \frac{c}{\alpha^3}\|\langle x\rangle^3e^{c\alpha |\cdot|}f\|_{L^2(\RR^2)}.
$$
This estimate shows that one must take $f$ exponentially decaying for $\tilde w_+$ to belong to $L^2(\RR^2)$. The same estimate holds for $w_-$.
\end{proof}




\section{Asymptotic behavior}

Let us now turn to the far field asymptotics of the residus $g_+(k^2)$. The latter is an oscillatory integral whose phase is stationary on $\Cc^+(k^2)=\Cc(k^2)\cap\{\psi_+\neq 0\}$ when $\D_t\vp=\D_t\ell\cdot x/|x|=0$. Since $\vp$ is analytic as a function of $t\in[0,1]$ it has finitely many extrema. Let $t_*$ and $\ell_*=\ell(t_*)$ be a critical point (resp. value). Since $\vp$ depends on $\theta=arg(x)$, $t_*$ is a function of $\theta$.
For comparison purpose recall that in the homogeneous case (i.e. $\alpha$ and $\beta$ constant) there is just one curve $\Cc$ which is a circle. There is one outgoing stationary point $t=\theta$ defined on $[0,2\pi[$.
When $\Cc$ is not convex or not closed then $t_*$ is only defined on a subintervall $I_*$ of $[0,2\pi[$. A point $\ell_*$ on a convex part of $\Cc$ moves anti-clockwise as $\theta$ increases while points on concave parts move clockwise. The extremities of $I_*$ correspond to inflexion points of $\Cc$ and there $dt_*/d\theta$ is infinite and two critical values merge or emerge.

The derivative $\D_t^2\vp$ is related to the curvature of $\Cc$. Indeed, since $x$ and $\D_t\ell$ are orthogonal we have
\begin{equation}\label{curv}
\D_t^2\vp = \D_t^2\ell\times\frac{\D_t\ell}{|\D_t\ell|}=\kappa|\D_t\ell|^2
\end{equation}
where $\kappa$ is the curvature.
When the curvature vanishes the phase degenerates. This is a well known situation in optics: if the phase is first order degenerated then it is cubic and the integral around this inflexion point is a Airy function~\cite{Ai}. Let $\theta_*$ be such an inflexion point and $\chi$ a test function supported about $t_*$ such that $\chi(t_*)=1$.
Let us split $g_+=g_{+*}+\tilde g_+$ where $g_{+*}$ is defined like $g_+$ but replacing $pf=(\hat f,v)v$ by $\chi pf$. From~\cite{Hor} Theorem 7.7.18 there are functions $\alpha$, $\gamma$ and $w_*$ and $\tilde w_*$ such that $r\alpha(\theta_*)=x\cdot\ell_*$, $\gamma(\theta_*)=0$ and for $\theta$ in a neighborhood of $\theta_*$ (orthogonal direction to $\D_t\ell(t_*$) the following asymptotics holds
\begin{equation}\label{Airy}
g_{+*}(x) = \frac{e^{ir\alpha(\theta)}}{r^{1/3}}\left (Ai(\gamma(\theta) r^{2/3}) w_*(x)+ \frac{1}{r^{1/3}}Ai'(\gamma(\theta) r^{1/3}) \tilde w_*(x)\right ) + O\left(\frac{1}{r^{3/2}}\right)
\end{equation}
where $O(r^s)$ is with respect to $L^{\infty}$ norm.
Actually~\cite{Hor} is true for $pf$ not depending on $x$. However the formula still holds true because $x$ can be considered as a parameter and $pf$ is a bounded (periodic) function of $x$. 
From Appendix~\eqref{Ai1} and~\eqref{Ai2} we find that $\alpha(\theta)=\ell_*x/|x|$ and $w_*=d_*v$ where $d_*$ is a non zero coefficient.
Moreover there is a full Taylor expansion in powers $1/r^{1/3+n}$ and $1/r^{2/3+n}$ and the exponent in the $O()$ term is one order bigger than the fisrt term. As for the first term its order is $O(r^{-1/2})$ because the oscillatory part of $Ai$ and $Ai'$ decays like $r^{-1/4}$. Note that non degenerate critical points also give $O(r^{-1/2})$ amplitude. Only the "far field" pattern is different since $Ai$ is exponentially decaying for positive arguments.


To get formula~\eqref{exp} of theorem~\ref{t2} we resume the index $j$ (of the curve $\Cc_j$) which we dropped in section~\ref{sec22}. Take $k$ as in Theorem~\ref{t1} and denote by $\ud\ell_{jp}$ the finitely many inflexion points of the curve $\Cc_j$.
Denote by $t_{jn}$ the (finitely many) critical points of $\vp_j$ and $\ell_{jn}=\ell_j(t_{jn})\in\Cc_j^+(k^2)$ the related Floquet numbers. These are analytic functions of $\theta\in I_{jn}$ where $I_{jn}$ is an open intervall whose extremities are the angles $\theta_{jp}$ associated to the $\ud\ell_{jp}$.
Then for $\theta$ away from the $\theta_{jp}$ the following asymptotics holds
\begin{equation}\label{Staphas}
g_{j+}(x,k^2)=i\sqrt{2\pi} e^{-i\pi/4}\sum_n 1_{I_{jn}}(\theta)e^{i\ell_{jn}\cdot x}\frac{(\hat f(\ell_{jn}),v_j(\ell_{jn}))_{L^2(W)}}{(\D_t^2\vp_j(t_{jn},k^2)|x|)^{1/2}}v_j(\ell_{jn},x)\frac{|\D_t\ell_j(t_{jn})|}{|\na\mu_j(\ell_{jn})|}+R_2
\end{equation}
As for $\theta$ about $\theta_{jp}$ let $n_1$ and $n_2$ be the index of the intervals $I_{jn}$ whose mutual end is $\theta_{jp}$. Then splitting $g_{j+}=g_{j+*}+\tilde g_{j+}$ as above we find
\begin{multline}
g_{j+}(x,k^2)=i\sqrt{2\pi} e^{-i\pi/4}\sum_{n\notin\{n_1,n_2\}} 1_{I_{jn}}(\theta)e^{i\ell_{jn}\cdot x}\frac{(\hat f(\ell_{jn}),v_j(\ell_{jn}))_{L^2(W)}}{(\D_t^2\vp_j(t_{jn},k^2)|x|)^{1/2}}v_j(\ell_{jn},x)\frac{|\D_t\ell_j(t_{jn})|}{|\na\mu_j(\ell_{jn})|}\\
 + \sum_p \frac{e^{ix\cdot\ud\ell_{jp}}}{r^{1/3}}\left (\tilde c_j Ai(\gamma_{jp}(\theta-\theta_{jp})r^{2/3})v_j(x,\ud\ell_{jp})+\frac{1}{r^{1/3}}Ai'(\gamma_{jp}(\theta-\theta_{jp})r^{2/3})\tilde w_{jp}(x)\right)+ R_2
\end{multline}
In both cases $R_2=O(r^{-3/2})$. In particular it belongs to $H^1(\RR^2)$.
Using~\eqref{curv} and recalling~\eqref{Resout},\eqref{eqdeploy},\eqref{split1} and keeping one index on a bigger range we arrive at~\eqref{exp},\eqref{fAi}.

\section{Uniqueness} 


Let $u$ be an outgoing solution of~\eqref{e1} according to Definition~\ref{def1} with $f=0$ (no source term).
Assumption~\ref{ass2} about the regularity of the coefficients entails that $\nabla u$ is continuous except across (the smooth) discontinuity of $\alpha$. Then the same holds for $R$ since the leading terms (of $u$) are continuous. This allows to integrate $\na R$ on a curve. Then uniqueness will follow from
\begin{lem}\label{l3}
As $t$ goes to infinity
\begin{equation}
 \Im (Pu,u)_{L^2(C_t)}=\sum_k\int_{S_k}\frac{|c_k(\theta)|^2}{(\beta \tilde e(\ell_k(\theta)),\tilde e(\ell_k(\theta)))_2}\na\tilde\lambda(\ell_k(\theta))\cdot\vec n\,d\sigma+\sum_{k'}\alpha_{k'}|\tilde c_{k'}|^2+o(1)
\end{equation}
where $C_t$ is the circle of radius $t$ and $S_k$ the part of the unit circle related to $I_k$ and $\tilde e,\tilde\lambda$ are defined in~\eqref{pw} and $\alpha_{k'}>0$.
\end{lem}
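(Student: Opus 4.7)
The plan is to interpret $\Im(Pu,u)_{L^2(C_t)}$ as the imaginary part of the flux form $\int_{C_t}\alpha\,\partial_\nu u\,\bar u\,d\sigma$ coming from Green's identity on the disk of radius $t$, substitute the outgoing ansatz of Definition \ref{defi1}, and evaluate the limit term by term. Write $u = u_{\mathrm{osc}} + u_{\mathrm{Ai}} + R$ where $u_{\mathrm{osc}}$ collects the $r^{-1/2}$-beams on $[0,2\pi]\setminus\bigcup_j\cN_j$, $u_{\mathrm{Ai}}$ the Airy wavepackets inside each $\cN_j$, and $R\in H^1(\RR^2)$. From $\int_0^\infty\|R\|_{H^1(C_t)}^2\,dt<\infty$ there exists a sequence $t_m\to\infty$ along which $\|R\|_{H^1(C_{t_m})}\to 0$; combined with the $t$-uniform bounds $\|u_{\mathrm{osc}}\|_{H^1(C_t)}=O(1)$, $\|u_{\mathrm{Ai}}\|_{H^1(C_t)}=O(1)$ (granted by Assumption \ref{ass2}) and Cauchy-Schwarz, this removes every term involving $R$ in the limit, leaving only the self-interactions of $u_{\mathrm{osc}}$ and $u_{\mathrm{Ai}}$.

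For $u_{\mathrm{osc}}=r^{-1/2}\sum_n c_n(\theta)\phi_n(x,\ell_n(\theta))$ with $\phi_n(x,\ell)=e^{i\ell\cdot x}\tilde e(x,\ell)$, the key geometric observation is that, by the very definition of $\ell_n(\theta)$, the group velocity $\na\tilde\lambda(\ell_n)$ is parallel to $\hat x$, so the tangent $\partial_\theta\ell_n$ to the Fermi level is orthogonal to $\hat x$ and $\partial_\theta\ell_n\cdot x = r\,\partial_\theta\ell_n\cdot\hat x = 0$. Consequently all $\theta$-derivatives acting through $\ell_n$ on the Bloch phase vanish, the remaining lower-order derivatives of $r^{-1/2}$, $c_n(\theta)$ and $\tilde e(x,\ell_n(\theta))$ give only $O(r^{-3/2})$, and
\[
\na u_{\mathrm{osc}} = \frac{1}{\sqrt r}\sum_n c_n(\theta)\,\na_x\phi_n\bigl|_{\ell=\ell_n(\theta)}+O(r^{-3/2}).
\]
After multiplication by $\alpha\,\hat x$ and integration over $C_t$, the factor $1/r$ cancels with $d\sigma = t\,d\theta$, leaving a sum over pairs $(n,m)$ of $\int_0^{2\pi}c_n\bar c_m\,\Im[\alpha\,\hat x\cdot\bar\phi_m\na\phi_n]|_{x=t\hat x(\theta)}\,d\theta$.

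The off-diagonal terms $n\ne m$ carry a Bloch phase $e^{i(\ell_n-\ell_m)\cdot x}$ multiplying a $W$-periodic factor; expanding this periodic factor in Fourier series reduces each such term to $\sum_k\int_0^{2\pi}g_k(\theta)e^{it(\ell_n-\ell_m+k)\cdot\hat x(\theta)}d\theta$ with $k$ in the reciprocal lattice. Since distinct stationary points lie in distinct cosets modulo the reciprocal lattice (they represent distinct Bloch modes), $\ell_n-\ell_m+k\ne 0$ for every $k$, one-dimensional stationary phase in $\theta$ gives the bound $O(t^{-1/2})$, and summation in $k$ converges thanks to the rapid decay of $g_k$. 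For the diagonal $n=m$ term, the $W$-periodicity of $G_n(y,\theta):=\alpha(y)\hat x\cdot\Im(\bar\phi_n\na\phi_n)(y)$ in $y$ together with Weyl equidistribution of $t\hat x(\theta)$ modulo $W$ yields
\[
\lim_{t_m\to\infty}\int_0^{2\pi}|c_n|^2 G_n(t_m\hat x,\theta)\,d\theta = \frac{1}{|W|}\int_0^{2\pi}|c_n|^2\int_W G_n(y,\theta)\,dy\,d\theta,
\]
and the Feynman-Hellmann identity, obtained by differentiating $a_\ell(\tilde e,\tilde e)=\lambda_n(\ell)(\beta\tilde e,\tilde e)_{L^2(W)}$ in $\ell$ and eliminating the $\partial_\ell\tilde e$-contributions via the eigenvalue equation, supplies the crucial relation $\int_W\alpha\,\Im(\bar\phi_n\na\phi_n)\,dy = \tfrac12\,\na_\ell\lambda_n(\ell)(\beta\tilde e(\ell),\tilde e(\ell))_{L^2(W)}$. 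Absorbing the normalization of $\tilde e$ chosen in Definition \ref{defi1}, the diagonal piece becomes exactly $\sum_k\int_{S_k}|c_k|^2\na\tilde\lambda(\ell_k)\cdot\vec n/(\beta\tilde e(\ell_k),\tilde e(\ell_k))_2\,d\sigma$.

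It remains to treat the Airy contribution inside each $\cN_j$. The change of variable $s = r^{2/3}\gamma_j(\theta-\theta_j)$ turns the arc-length element into $r^{1/3}\gamma_j^{-1}ds$; using $\na s\approx r^{-1/3}\gamma_j\hat\theta$, $\hat\theta\cdot\hat x=0$, and the identities $Ai''=sAi$ and $\tfrac{d}{ds}|Ai|^2=2Ai\,Ai'$, one expands $\partial_\nu u_{\mathrm{Ai}}\bar u_{\mathrm{Ai}}$ into four pieces $|Ai|^2$, $|Ai'|^2$, $Ai\,\overline{Ai'}$, $\overline{Ai}\,Ai'$, and the same Weyl-averaging argument applied to the periodic $x$-dependence produces a universal $s$-integral in front of $\|\na\tilde\lambda(\ud\ell_j)\|/(\beta\tilde e(\ud\ell_j),\tilde e(\ud\ell_j))_2$. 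Matching this with the stationary-phase regime outside the core via the Airy asymptotics $Ai(s)\sim\pi^{-1/2}|s|^{-1/4}\cos(\tfrac23|s|^{3/2}-\pi/4)$ (which realizes the wavepacket as the coherent superposition of the two merging beams $\ell_{n_1}(\theta)$, $\ell_{n_2}(\theta)$) yields the term $\alpha_j|\tilde c_j|^2$ with $\alpha_j>0$, the strict positivity being a direct consequence of the non-degeneracy $\gamma_j\ne 0$ from Theorem \ref{lcone} and of $\int_\RR Ai(s)^2\,ds>0$. The main obstacle is precisely this last step: the bookkeeping of the overlap between the Airy neighborhood $\cN_j$ and the stationary arcs $I_{n_1},I_{n_2}$ must be done carefully to ensure that no arc of $C_t$ is double-counted and that the $Ai/Ai'$ cross-terms do not cancel the leading positive contribution, which would break the uniqueness argument of Theorem \ref{t3}.
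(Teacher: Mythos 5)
Your proof follows the same route as the paper's: Green's identity $\Im(Pu,u)_{D_t}=\Im(\alpha\partial_n u,u)_{C_t}$, the decomposition $u=u_{\mathrm{osc}}+u_{\mathrm{Ai}}+R$ via a partition of unity, suppression of the $R$-terms, the stationary-phase relation $\partial_\theta\ell_n(\theta)\cdot x=0$ so that only the $x$-gradient of the Bloch profile survives, stationary phase for the off-diagonal beam--beam terms, and equidistribution plus the group-velocity identity (the paper's \eqref{speed}, your ``Feynman--Hellmann'') for the diagonal part. Your subsequence extraction for $R$ is in fact a welcome fix: the paper asserts $T_0(t)\to 0$ from integrability plus continuity alone, which does not hold in general, and passing to a subsequence $t_m\to\infty$ as you do is the correct way to conclude (and suffices for Theorem~\ref{t3}).

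There are, however, two concrete gaps. First, you claim that killing $R$ leaves ``only the self-interactions of $u_{\mathrm{osc}}$ and $u_{\mathrm{Ai}}$''; the beam--Airy cross terms $(\alpha\partial_n u_i,\tilde u_j)_{C_t}$ do not vanish by support disjointness, since the partition of unity must be smooth (otherwise $\partial_n u$ is not well defined along $C_t$) and near $\theta_j$ both pieces are of size $O(r^{-1/2})$. A dedicated estimate (step~5 of the paper, giving $O(t^{-1/2})$) is required. Second, the positivity claim via ``$\int_\RR Ai(s)^2\,ds>0$'' cannot be invoked: $Ai^2(s)\sim\pi^{-1}|s|^{-1/2}\cos^2(\cdot)$ as $s\to-\infty$, so that integral diverges. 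What is actually needed is that $t^{1/3}Ai^2(t^{2/3}\gamma_j(\theta))\,d\theta$ converges weak-$*$ to a positive finite measure on $\cN_j$, together with a check that the $|Ai'|^2$-piece of $|\tilde u_j|^2$ (which is of the same order $O(r^{-1})$ in the oscillatory region) does not spoil the sign; you flagged this as incomplete, and it is exactly here that the proof must be closed. Also note your velocity identity yields $(\beta\tilde e,\tilde e)$ in the numerator whereas the lemma puts it in the denominator; ``absorbing the normalization'' should be made explicit (e.g.\ $\tilde e$ unit in $L^2(\beta\,dx)$), since it is not fixed by Definition~\ref{defi1}.
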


\begin{proof}
Integrating by part in the disk $D_t$ of radius $t$ and taking the imaginary part yields
$$
0=\Im (Pu,u)_{L^2(D_t)}=\Im (\alpha\D_n u,u)_{L^2(C_t)}.
$$
Let us examine this last expression expanding $u$ according to~\ref{def1}(\eqref{convexp},\eqref{inflexp}). First to avoid explicit bounds in evaluating integrals let us introduce a partition of unity of $[0,2\pi[$: 
$$\sum_k\chi_k+\sum_{k'}\tilde\chi_{k'}=1$$
where the union of the supports of the $\chi_k$ is $[0,2\pi[\setminus\cN$ and the support of $\tilde\chi_{k'}$ is $\cN_k'$ with $\cN_k'$ slightly bigger than $\cN_k$ centered at $\theta_k$ and mutually disjoint. Put
\begin{itemize}
\item $u_k(x)=\ds\frac{1}{\sqrt{r}}\chi_k(\theta)c_k(\theta)e^{i\ell_k(\theta)\cdot x}v_k(x,\ell_k(\theta))$
\item For $\theta\in\cN_k'$ set
  $\tilde u_k(x)=\ds \tilde c_k\tilde\chi_k\frac{e^{ix\cdot\ud\ell_k}}{r^{1/3}} \Big ( Ai( r^{2/3}\gamma_k(\theta-\theta_k))v_k(x,\ud\ell_k)+\frac{1}{r^{1/3}}Ai'( r^{2/3}\gamma_k(\theta-\theta_k))\tilde w_k(x,\ud\ell_k)\Big)$
\end{itemize}
With these notations~\eqref{convexp} and~\eqref{inflexp} read
$$
u(x)=\sum_k u_k+\sum_{k'}\tilde u_{k'}+R.
$$
Let us break $(\alpha\D_n u,u)_{L^2(C_t)}$ accordingly :
$$
\begin{array}{ll}
1.\ T_0(t)=(\alpha \D_n R,R)_{L^2(C_t)} & 2.\ T_k(t)=(\alpha \D_n R,u_k)_{L^2(C_t)}\ \text{or}\ (\alpha \D_n u_k,R)_{L^2(C_t)}\\
3.\ T_{ij}=(\alpha \D_n u_i,u_j)_{L^2(C_t)}\ \text{where}\ I_i\cap I_j\neq\emptyset & 4.\ \tilde T_k=(\alpha \D_n R,\tilde u_k)_{L^2(C_t)}\ \text{or}\ (\alpha \D_n \tilde u_k,R)_{L^2(C_t)} \\
5.\ \tilde T_{ij}=(\alpha \D_n u_i,\tilde u_j)_{L^2(C_t)}\ \text{or}\ (\alpha \D_n \tilde u_i, u_j)_{L^2(C_t)} & 6.\ T_i^{\sharp}=(\alpha \D_n \tilde u_i,\tilde u_i)_{L^2(C_t)}
\end{array}
$$

{\bf 1.} Let us first consider the term $T_0(t)$. Since $R\in H^1$ and is piecewise continuous the function $T_0$ is integrable and continuous on $\RR^+$ so $\lim_{t\rightarrow \infty}f(t)=0$.


{\bf 2.} Then we have $T_k(t)=o(1)$ because $u_k\in L^2(C_t)$ uniformly with respect to $t$ and $\|\na R\|_{L^2(C_t)}$ is continuous and belongs to $L^2(\RR^+)$. Similarly $(\alpha \D_n u_k,R)_{L^2(C_t)}=o(1)$.


{\bf 3.} First compute the gradient of $u_k$ with respect to $x$. Since $u_k$ is given by a profile depending on $r,\theta,x$ let us set $u_k(x)=U_k(r,\theta,x)$ and let us use the chain rule
$$
\na u_k(x)=\D_r U_k(r,\theta,x)\vec e_r+\frac{1}{r}\D_{\theta}U_k\vec e_{\theta}+\na_x U_k
$$
$$
\na u_k(x)=\frac{e^{i\ell_k\cdot x}}{\sqrt{r}}\left (-\frac{c_k(\theta)}{r}\chi_kv_k\vec{e}_r + c_k(\theta)\chi_k(\na_x+i\ell_k)v_k+\D_{\theta}(c_k\chi_kv_k)\frac{1}{r}\vec e_{\theta}\right ).
$$
Observe that thanks to the condition of stationary phase $\D_{\theta}e^{i\ell_k(\theta)\cdot x}=0$.
Hence
\begin{equation*}
  T_{ij}=\int_{C_t}\chi_i(\theta)\chi_j(\theta)\frac{\alpha c_i\bar c_j}{t}e^{ix\cdot(\ell_i-\ell_j)}((\na_x+i\ell_i(\theta)) v_i\cdot\vec{n})\bar v_jd\sigma+O(t^{-1}),\quad\text{where}\ \theta=arg(\sigma)
\end{equation*}
For $j\neq i$ we use the stationary phase theorem as we did in the proof of theorem 3.1 to show that the integral is of lower order. Indeed using polar coordinates the phase $\phi(\theta)=t\vec e_r(\theta)\cdot(\ell_i-\ell_j)$ has derivative
$$
t^{-1}\phi'(\theta) = \vec e_{\theta}(\ell_i-\ell_j) + \vec e_r(\ell_i'-\ell_j').
$$
where by definition $\ell_k'(\theta)\cdot x=0$ for all $k$. Then assuming as in Theorem~\ref{t2} that the phase is a Morse function it has at most a finite number of stationary points. Hence the integral is $O(t^{-1/2})$.

For $i=j$ let us set $p_j(x,\theta)=\alpha(x)\vec{n}(\theta)\cdot(\na_x+i\ell_j(\theta)) v_j(x,\ell_j(\theta))\bar v_j(x,\ell_j(\theta))$ which we consider for any $\theta$ and $x$ (not only for $\theta=x/|x|$). This is a periodic function with respect to $x$ which belongs to $L^{\infty}(W\times I_j)$ thanks to the assumption on $\alpha$. Rescaling by $t$ we get 
$$
T_{jj} = \int_{S_j}\chi_j(\theta)|c_j(\theta)|^2p_j(tx,\theta)d\sigma(x)+O(t^{-1}).
$$
From~\cite{BLP} p.94 when $t$ goes to infinity the latter converges to
$$
\int_{I_j}\chi_j(\theta)|c_j(\theta)|^2p_{j0}(\theta)d\theta.
$$
Indeed one can uniformly approximate $p_j$ by smooth trigonometric polynomial for which the limit holds using stationary phase theorem.
Using identity~\eqref{speed} in Appendix and definition~\ref{pw} gives the formula of the lemma.\\



{\bf 4.} First $\tilde u_k$ is a sum of two terms
$$
\tilde u_k=\tilde u_{k1}+\tilde u_{k2}
$$
where the second term is decaying quiker than the first for large $r$. So it is enough to deal with the first. To show that $\tilde T_k=o(1)$ we proceed as for $T_k$ showing that $\tilde u_{k1}$ belongs to $L^2(C_t)$ uniformly with respect to $t$:
$$
\|\tilde u_{k}\|^2_{L^2(C_t)}\leq |\tilde c_k|^2\|v_k(\ud\ell_k)\|_{\infty}t^{1/3}\int_0^{2\pi}\tilde\chi_k^2(\theta)Ai^2(\gamma_k(\theta) t^{2/3}) d\theta.
$$
Since $Ai(-\rho)\underset{+\infty}{\sim}|\rho|^{-1/4}$ the last integral is of order $t^{-1/3}$. Hence $\tilde T_k=o(1)$.

{\bf 5.} 
Hiding $\tilde u_{k2}$ in a $O(t^{-1/3})$ term we get
$$
\tilde T_{ik}=\tilde c_k\int_{C_t}\frac{\alpha(x) c_i}{t^{5/6}}\chi_i(\theta)\chi_k(\theta)e^{ix\cdot(\ell_i-\ud\ell_k)}((\na_x+i\ell_i(\theta)) v_i(x,\ell_i(\theta))\cdot\vec{n})Ai(\gamma_k(\theta-\theta_k)t^{2/3})\bar v_k(x,\ud\ell_k)d\sigma+O(t^{-1/3})
$$
When the phase is stationnary at $\theta_k$ then the leading term has amplitude $t^{1-5/6-1/2}Ai(\gamma_k(0)t^{2/3})=O(t^{-1/2})$. Otherwise it is $O(t^{-1})$.

{\bf 6.} As in step 3 we first compute
$$
\na_x\tilde u_{k1}=\tilde c_k\tilde\chi_k(\theta)\frac{e^{ix\cdot\ell_k}}{r^{1/3}}Ai(\gamma_k(\theta-\theta_k) r^{2/3})(\na+i\ud\ell_k)v_k(x,\ud\ell_k)+O(r^{-3/2}).
$$
shifting by $\theta_k$ and setting $p_k=\alpha(x)\vec{e}_r\cdot(\na_x+i\ell_k)w_k\bar w_k$ yields
$$
(\alpha\D_n \tilde u_{k1},\tilde u_{k1})_{L^2(C_t)}=|c_k|^2t^{1/3}\int_0^{2\pi}\tilde\chi_k^2(\theta)Ai^2(t^{2/3}\gamma_k(\theta))p_k(x,\theta)d\theta+O(t^{-1}).
$$
Using again the periodicity of $p_k$ with respect to $x$ as in {\bf 3.} we can adapt the stationary phase method to each frequency of $p_k$ giving an integral of size $O(t^{-1/3})$ if the phase is non stationary and $O(t^{-1/4})$ if it is stationary. So only the mean value of $p_k$ contributes at infinity:
$$
(\alpha\D_n \tilde u_{k1},\tilde u_{k1})_{L^2(C_t)}=|\tilde c_k|^2(\beta\tilde e(\ud\ell_k),\tilde e(\ud\ell_k))\int_0^{2\pi}\tilde\chi_k^2(\theta)t^{1/3}Ai^2(t^{2/3}\gamma_k(\theta))\na\tilde\lambda(\ud\ell_k)\cdot\vec e_r(\theta) d\theta+o(1)
$$
When $t$ goes to infinite the last integral converges to $d_k\na\tilde\lambda(\ud\ell_k)\cdot\vec w$ which is positive because $d_k>0$ and $\vec w$ is close to $\vec e_r(\theta_k)$ which is collinear and directed like $\na\tilde\lambda(\ud\ell_k)$.
\end{proof}

\begin{proof}[Proof of theorem~\ref{t3}]
Take two outgoing solutions of~\eqref{e1} and denote by $w$ their difference. It is an outgoing solution of $Pw=0$. Let us still denote by $c_k$ and $\tilde c_{k'}$ its asymptotic coefficients (according to~\eqref{convexp} and~\eqref{inflexp}). Since $\na\tilde\lambda(\ell_k)\cdot n>0$ in $S_k$ the previous lemma implies that $c_k(\theta)=0$ almost everywhere and $\tilde c_{k'}=0$. Thus $w\in H^1(\RR^2)$ but since the spectrum of $P$ is purely essential from Assumption~\ref{ass1} it follows that $w=0$.
\end{proof}

\section{Singular Fermi levels}

\subsection{Nodal point}\label{secnodal}

Let us denote by $\lambda_0=k_0^2$ a point of $\sigma_1$. We claim that the resolvent $R(z):=(P-z)^{-1}: L^2_{comp}\rightarrow L^2_{loc}$ about such a point is continuous but not holomorphic, a fact that was not mentioned by~\cite{G}. Indeed let us consider~\eqref{roots} again which is the case of two bands meeting non critically at a single point $\ell_0$ which I assume to be $0$. 

Then the band functions are of the form 
$$
\lambda_{\pm}(\ell)=g(\ell)\pm\sqrt{a^2(\ell)+b^2(\ell)}
$$
where $g,a,b$ are analytic, $a$ or $b$ do not vanish identically and the three functions vanish at $\ell_0$. Note that if we consider the touching bands together this gives an arc analytic function~\cite{KP}, Theorem 7.2.


Next since $\lambda_{\pm}$ are non critical at $\ell_0$ it implies that the minimal degree of Taylor's expansion of $a^2+b^2$ has to be less or equal to two thus $a$ or $b$ has to be linear at first order. We thus consider the following functions
$$
\lambda_{\pm}(\ell)=\lambda_0\pm\sqrt{\ell_1^2+\ell_2^{2m}},\quad m\in\NN^*.
$$

With $z\in\CC$ close to $\lambda_0$ the leading term of the resolvent $u_1(x,z)$ (it corresponds to $u_{1,\eps}$ when $z=\lambda_0+i\eps$) reads
\begin{align*}
 u_1(x,z)&=\int_Be^{i\ell\cdot x}\left(\psi(\lambda_-(\ell))\frac{pf_-(\ell,x)}{z-\lambda_0+\sqrt{\ell_1^2+\ell_2^{2m}}}+\psi(\lambda_+(\ell))\frac{pf_+(\ell,x)}{z-\lambda_0-\sqrt{\ell_1^2+\ell_2^{2m}}}\right)d\ell.
\end{align*}
For simplicity let us choose $\psi=1_{[\lambda_0-\alpha,\lambda_0+\alpha]}$.
Let us first consider the case $m=1$ and use polar coordinates
$$
u_1(x,z)=\int_{S^1}\int_0^{\alpha}e^{i\rho u\cdot x}\left(\frac{pf_-(\rho u,x)}{z-\lambda_0+\rho}+\frac{pf_+(\rho u,x)}{z-\lambda_0-\rho}\right)\rho d\rho du.
$$
Expanding $\rho/(z-\lambda_0\pm\rho)=\pm 1\mp \frac{z-\lambda_0}{z-\lambda_0\pm\rho}$ gives
$$
u_1(x,z)=\int_{S^1}\int_0^{\alpha}e^{i\rho u\cdot x}(pf_-(\rho u,x)-pf_+(\rho u,x))\,d\rho du + z\int_{S^1}\int_0^{\alpha}e^{i\rho u\cdot x}\left(-\frac{pf_-(\rho u,x)}{z-\lambda_0+\rho}+\frac{pf_+(\rho u,x)}{z-\lambda_0-\rho}\right) d\rho du
$$
The stationary phase method shows that the first term belongs to $H^1(\RR^2)$ (exactly as for $w_0$).

Turning to the second term from~\cite{KP}, Theorem 7.2 $pf_{\pm}$ are arc analytic functions so there is a function $qf(\rho, u,x)$ analytic with respect to $\rho$ in a neighborhood of $0$ such that
$$
pf_{\pm}(\pm\rho u,x)=qf(\pm \rho, u, x).
$$
Taking the opposite of $u$ in the integral involving $pf_-$ and then expressing $pf_{\pm}$ in terms of $qf$ one finds that the second term in the previous expression of $u_1(x,z)$ reads
$$
u_1(x,z)=z\int_{S^1} \left (\int_0^{-\alpha}g(\rho,u)d\rho+\int_0^{\alpha}g(\rho,u)d\rho\right)du,\qquad\qquad g(\rho,u)=e^{i\rho u\cdot x}\frac{qf(\rho, u,x)}{z-\lambda_0-\rho}
$$
This expression defines a function of $z\log(z)$ because the integrals in $\rho$ do not combine into one integral running through $0$ (similarly as in the homogeneous case). So the resolvent is indeed continuous but not holomorphic about $\lambda_0$.

\begin{rem}
  This expansion differs from the homogeneous case ($P=-\Delta$) for which the resolvent about $0$ expands $R(z)=P_0\log(z)+P_1+O(z\log(z))$ where $P_j$ are finite rank operators in $L^2(\RR^2)$ (see~\cite{JN}). One could wonder why there is such a difference thinking of $0$ as a nodal point of the characteristic manifold of $-\Delta$. This is of course because the characteristic manifold is the set of $(\omega,\xi)$ (where $\xi$ is the Fourier variable) such that $\omega=\pm|\xi|$, hence $\lambda=|\xi|^2$. So $0$ belongs to $\sigma_0$ (see next paragraph).
\end{rem}

Now for $k$ close to $k_0$ the Fermi curve contains a curve $\Cc$ which is a small circle whose curvature is proportional to $1/|k-k_0|$. In view of formula~\eqref{exp} taking the limit $k\rightarrow k_0$ shows that the leading term whose index is related to $\Cc$ vanishes. Thus if the Fermi level does not intersect any other band then the resolvent about $\lambda_0$ belongs to $O((z-\lambda_0)\log(z-\lambda_0))$.

For $m>1$ odd letting $\ell_2^m=y_2$ we set back to the case $m=1$ up to the jaccobian $|y_2|^{1/m-1}/m$ which is integrable about $0$ even and arc-analytic. So the previous resolvent expansion holds. However the previous consideration about the curvature does not hold since it vanishes in the direction $(1,0)$ so~\eqref{exp} does not hold and one needs to adapt item 2 of theorem~\ref{t2} with a higher order Airy-like function. It is very likely that the $O(1)$ term in the resolvent expansion does not vanish.


\subsection{Generical critical point}\label{Sglancing}

Critical points do not contribute to outgoing waves since the speed vanishes at this point but such points are responsible for some singularity of the resolvent which is logarithmic in the generic 2D case. This situation has been addressed to any dimension by C. Gerard in~\cite{G} Theorem 3.6. However the generality of the quoted paper makes it difficult to understand the way the solution is computed. We thus wish to give a very short calculation to give an insight of the result of~\cite{G} when $\lambda_0\in\sigma_0$ is a non degenerated Morse critical point of $\lambda_n$ at $\ell=0$. In such a case let us write $\lambda_n$ as a quadratic form $\lambda_n(\ell)=\lambda_0+(A(\ell)\ell,\ell)$ where the brackets denote the scalar product in $\RR^2$ and $A$ is a smooth 2 by 2 matrix which does not vanish at $\ell=0$. Let us approximate $\psi$ by the characteristic function of a small neighborhood $V_0$ of $0$. Then choosing $\lambda_{\eps}=\lambda_0+i\eps$ we get
$$
u_{1,\eps}=\int_Be^{i\ell\cdot x}\frac{\psi(\ell)}{\lambda_{\eps}-\lambda_n(\ell)}pf(\ell,x)d\ell=\int_{V_0/\sqrt{\eps}}e^{i\sqrt{\eps}s\cdot x}\frac{1}{i-(A(s\sqrt{\eps})s,s)}pf(s\sqrt{\eps},x)ds.
$$
For $\eps$ small and $0<\alpha<1/2$ the main contribution reads
$$pf(0,x)\int_{V_0/\eps^{\alpha}}\frac{1}{i-(A(0)s,s)}ds.$$
Whatever the signature of $A(0)$ the integral diverges like O$(\ln(\eps))$ as $\eps$ goes to zero. If one replaces $\lambda_{\eps}$ by $\lambda$ in a complex neighborhood of $\lambda_0$ and considers $v_0$ as a function of $\lambda$ one readily sees that $v_0$ is a 2-sheaves analytic function diverging logarithmically at $\lambda_0$. This corresponds to the expression of the outgoing resolvent given in~\cite{G} Corollary 4.2 where one branch of the resolvent expands as follows
$$
(P-\lambda)^{-1}=E_0(\lambda)+C\ln(\lambda-\lambda_0)(M(\lambda_0)+(\lambda-\lambda_0)E_1(\lambda))
$$
where $C$ is a constant, $M$ is a finite dimensional operator and $E_j$ are holomorphic operators about $\lambda_0$.

\section{Greater dimensions}\label{S6}

When the dimension $d$ is greater than 2 the set of points of crossing in a Fermi level is generically of dimension $d-2$. One can still use~\eqref{pw} since the latter set is negligible in the Fermi Level. So if for a Fermi level there is only normal crossing then one can straightforwardly extend Theorem 2.1, 2.2 and 2.3.

From~\cite{KP} the set of points of singular crossing is a $d-2$ dimensional set so we expect $\sigma_1$ to be of non zero Lebesgue mesure and needs to be considered for every $k$. Theorem 7.2 of~\cite{KP} says that in this case there are blowing ups with smooth centers such that crossing eigenvalues become analytic. So theoretically one can compute $u$ as in paragraph~\ref{secnodal}.

\vspace{1cm}
{\Large\bf Appendix}
\appendix

\section{Floquet-Bloch transform and Sobolev spaces}\label{A1}

Let $f\in \cS(\RR^2)$ (Schwarz space). Then its Floquet-Bloch transform defined by
$$
\hat f(x,\ell)=\sum_{n\in\ZZ^2}f(x+2\pi n)e^{-i\ell\cdot(x+2\pi n)}
$$
is periodic with respect to $x$ and quasi-periodic with respect to $\ell$:
$$
\hat f(x,\ell+a_j)=e^{i x\cdot a_j}\hat f(x,\ell),
$$
where $a_j,\ j\in\{1,2\}$ are the unit vectors. Then the inverse Floquet-Bloch transform reads
$$
f(x)=\int_B e^{i\ell\cdot x}\hat f(x,\ell)d\ell.
$$
The Floquet-Bloch transform extends by density to $f\in L^2(\RR^2)$ and Parceval identity holds so that $\hat f\in L^2(W\times B)$ and
$$
\|f\|_{L^2(\RR^2)}=\|\hat f\|_{L^2(B\times W)}.
$$
Then the identity $\widehat{(\D_{x_j}f)}(x,\ell)=(\D_{x_j}+i\ell_j)\hat f(x,\ell)$ for $j\in\{1,2\}$ entails that there is a constant $c>1$ such that
$$
\frac{1}{c} \|f\|_{H^n(\RR^2)}\leq \|\hat f\|_{L^2(B,H^n(W))}\leq c\|f\|_{H^n(\RR^2)}.
$$
Here we used the notation
$$
L^2(B,H^n(W)):=\{h\in L^2(W\times B),\ \D_{\ell}^jh\in L^2(W\times B),\ j\in\{1,\ldots,n\}\}.
$$
Finally the identity $\widehat{(x_jf)}(x,\ell)=\D_{\ell_j}\hat f(x,\ell)$ for $j\in\{1,2\}$ implies
$$
\||x|^n f\|_{L^2(\RR^2)}\leq c \|\hat f\|_{L^2(W;H^n(B))},
$$
where $L^2(W;H^n(B))$ is defined in a similar way as $L^2(B;H^n(W))$.

\section{Analyticity of the Bloch variety}\label{A2}

Let us recall briefly Kuchment's results~\cite{Ku} about the analyticity of $\cB$. For elliptic operators of the form $T:=\sum_{|\alpha|\leq 2m}b_{\alpha}(x)\D^{\alpha}$ with smooth coefficients $b_{\alpha}$, Theorem 3.1.7 (\cite{Ku}) shows the analyticity of $\cB$. Thanks to the smoothness of the coefficients one can use a parametrix which allows to show that the operator $T(\ell)$ resulting from the application of the Floquet-Bloch transform to $T$ and acting on $H^2(B)$ is Fredholm with null index.

Here our operator $P(\ell)$ is of divergence form and with non smooth coefficients. We cannot make use of a parametrix as used in~\cite{Ku} but Theorem 3.1.7 mainly requires a holomorphic family of Fredholm operators with null index (see p. 118).
The opertor $P(\ell)$ is Fredholm because it is of compact resolvent type (standard use of Lax-Milgram Lemma and analytic Fredholm theorem) from $H(W)$ to $L^2(W)$ with
$$H(W)=\{u\in H_{per}^1(W),\alpha (\na u+i\ell)\in H_{div}(W)\}$$
where $H_{div}(W)$ is the subspace of $L^2(W)$ of $W$-periodic functions whose divergence belongs to $L^2(W)$.

Then since $P(\ell)$ is symmetric for real $\ell$ its deficiency index is zero.

The proof of Theorem 3.1.7 needs to be changed because $P(\ell)$ does not map Sobolev spaces to themselves. Moreover the domain of $P(\ell)$ (as operator) depends on $\ell$ since it requires that $\alpha(\na+i\ell)$ belongs to $H_{div}(W)$. We thus consider $p(\ell)$ the associated sesquilinear form on $H^1(W)$ which we expand:
$$
p(\ell)(u,v)=(\alpha\na u,\na v)_2+i\ell\left\{(\alpha u,\na v)_2-(\alpha\na u,v)_2\right\}+\ell^2(\alpha u,v)_2,\qquad u,v\in H^1(W).
$$
We decompose it as a sum of three forms $p(\ell)=p_0+i\ell p_1+(\ell^2-1)p_2$ with
$$
p_0(u,v)=(\alpha\na u,\na v)_2+(\alpha u,v)_2,\quad p_1(u,v)=(\alpha u,\na v)_2-(\alpha\na u,v)_2,\quad p_2(u,v)=(\alpha u,v)_2.
$$
Since $p_1$ is not sectorial on $L^2(W)$ we consider those forms as bounded forms on $H_{per}^1(W,\alpha)$. By Riesz-lemma there are associated operators $P_0,P_1$ and $P_2$ defined on $H_{per}^1(W,\alpha)$ endowed with the scalar product $p_0$. Those operators thus satisfy $p_0(P_ju,v)=p_j(u,v)$. In particular $P_0=Id$. Then denoting by $\tilde P(\ell)$ the operator associated to $p(\ell)$ we have
$$
\tilde P(\ell)=I+i\ell P_1+(\ell^2-1)P_2\quad\text{and observe that}\ ker(P(\ell))\subset ker(\tilde P(\ell)).
$$
Thus it is enough to prove Theorem 3.1.7 replacing $P(\ell)$ by $\tilde P(\ell)$. One thus only needs to prove that $P_j$ are Schatten class operators on $H_{per}^1(W,\alpha)$. This is greatly eased by making use of the operator $S^2:=(P(0)+\alpha)^{-1}$ for which there holds
$$
p_0(S^2u,v)=p_0(Su,Sv)=(u,v)_2,\quad\text{and}\quad P_2=S^2\alpha.
$$
\begin{lem}
  $S$ is a Schatten class operator in $H_{per}^1(W,\alpha)$.
\end{lem}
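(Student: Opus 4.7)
The plan is to realise $S^2$ as a compact positive self-adjoint operator on the Hilbert space $H^1_{per}(W,\alpha)$ equipped with the inner product $p_0$, identify its eigenvalues with the inverses of the eigenvalues of a second order elliptic operator, and conclude via Weyl's asymptotics.

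First, I would check that $S^2$ is positive, self-adjoint and compact on $(H^1_{per}(W,\alpha),p_0)$. Symmetry follows immediately from the defining identity $p_0(S^2u,v)=(u,v)_2$: swapping $u$ and $v$ yields $p_0(u,S^2 v)=p_0(S^2 u,v)$. Positivity reads $p_0(S^2u,u)=\|u\|_2^2\geq 0$. Compactness comes from the factorisation $S^2=\tilde S^2\circ J$, where $J:H^1_{per}(W,\alpha)\hookrightarrow L^2(W)$ is the compact Rellich embedding and $\tilde S^2:L^2(W)\to H^1_{per}(W,\alpha)$ is the bounded solution operator defined by $p_0(\tilde S^2u,v)=(u,v)_2$ (Lax-Milgram, using that $p_0$ is equivalent to the standard $H^1$ inner product since $0<\alpha_-\leq\alpha\leq\alpha_+$).

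Next I would identify the spectrum. The equation $S^2u=\mu u$ in $H^1_{per}$ is equivalent, by the definition of $S^2$, to $p_0(u,v)=\mu^{-1}(u,v)_2$ for every $v\in H^1_{per}$, which is the weak form of the eigenvalue problem for $T:=-\div(\alpha\na\cdot)+\alpha$ with periodic boundary conditions on $W$. Hence the eigenvalues $\mu_n$ of $S^2$ are exactly $\lambda_n^{-1}$, where $\lambda_n$ are the eigenvalues of $T$ ordered increasingly. By the min-max principle and the bounds $\alpha_-\leq\alpha\leq\alpha_+$, one obtains $\alpha_-(\lambda_n^{\Delta}+1)\leq\lambda_n\leq\alpha_+(\lambda_n^{\Delta}+1)$, where $\lambda_n^{\Delta}$ are the eigenvalues of the periodic Laplacian on $W$. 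The classical Weyl asymptotics in dimension two give $\lambda_n^{\Delta}\sim 4\pi n/|W|$ and therefore $\lambda_n\sim c_{\alpha}n$.

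Consequently $S=(S^2)^{1/2}$ is positive self-adjoint with singular values $s_n(S)=\lambda_n^{-1/2}\sim c'n^{-1/2}$, so $\sum_n s_n(S)^p<\infty$ for every $p>2$ and $S$ belongs to the Schatten class $\mathcal{S}^p$ for all $p>2$, which proves the lemma. The main subtle point is that $\alpha$ is merely bounded and measurable, so pseudo-differential Weyl asymptotics are unavailable; the min-max comparison with the periodic Laplacian handles this obstacle cleanly.
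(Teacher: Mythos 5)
Your proof is correct and follows essentially the same route as the paper: establish compactness via the Rellich embedding, then obtain the Schatten bound by comparing the eigenvalues of the operator associated to $p_0$ with those of the periodic Laplacian via the min-max principle. The paper compresses the second step into a single sentence (``compare $S$ to $\tilde S=(1-\Delta)^{-1/2}$\ldots a simple use of minmax principle''); you have simply unpacked it with the explicit eigenvalue identification and Weyl asymptotics, arriving at the same Schatten order $p>2$ that the paper uses in the following lemma.
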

\begin{proof}
  First $S$ is compact because if we take a bounded sequence $u_n\in H_{per}^1(W,\alpha)$ then by Rellich theorem there is a subsequence (still denoted by $u_n$) such that $u_n$ converges to some $u$ in $L^2(W)$. Then let us show that $Su_n$ converges in $H_{per}^1(W,\alpha)$. We have
  $$
  \| S(u_n-u)\|^2_{H_{per}^1(W,\alpha)}=p_0(S(u_n-u),S(u_n-u))=\|u_n-u\|_2^2
  $$
  which converges to zero.

  Then showing that $S$ is a Schatten class operator can be achieved by comparing $S$ to $\tilde S:=(1-\Delta)^{-1/2}$ which is so in $H_{per}^1(W,1)$. This a simple use of minmax principle.
\end{proof}
As a consequence $P_2$ is a Schatten class operator.
\begin{lem}
$P_1$ is a Schatten class operator of order $p>2$.
\end{lem}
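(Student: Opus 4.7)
The plan is to factor $P_1=Q^*-Q$ with $Q$ itself factoring as $Q=S\circ T$ for $T$ bounded on $H_{per}^1(W,\alpha)$. Combined with the previous lemma and the refinement (read off from its min-max comparison with $\tilde S=(1-\Delta)^{-1/2}$ on the flat torus, whose singular values $(1+|n|^2)^{-1/2}$ indexed by $n\in\ZZ^2$ give Schatten class exactly for $p>2$ by the 2d count $\sum_n(1+|n|^2)^{-p/2}<\infty\Leftrightarrow p>2$) that $S\in\mathcal S^p$ precisely for $p>2$, the Schatten ideal property then yields $P_1\in\mathcal S^p$ for $p>2$.

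First I would check that $p_1$ is skew-Hermitian, $p_1(v,u)=-\overline{p_1(u,v)}$, which forces $P_1^*=-P_1$ in $H_{per}^1(W,\alpha)$. Splitting $p_1(u,v)=(\alpha u,\na v)_2-(\alpha\na u,v)_2$ and letting $Q$ be the Riesz representative of the second piece (so $p_0(Qu,v)=(\alpha\na u,v)_2$), the first piece corresponds to $Q^*$, hence $P_1=Q^*-Q$ and it suffices to place $Q$ in the Schatten class $\mathcal S^p$ for $p>2$, since that class is closed under the adjoint.

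Unwinding the Riesz relation gives $Qu=S^2(\alpha\na u)$ (componentwise, since $\alpha\na u\in L^2(W)$ is vector-valued), which I factor as $Q=S\circ T$ with $Tu:=S(\alpha\na u)$. The identity $p_0(Sg,Sg)=(g,g)_2$ established in the proof of the previous lemma shows that $S:L^2(W)\to H_{per}^1(W,\alpha)$ is an isometry, hence bounded; composed with the bounded map $u\mapsto\alpha\na u$ from $H_{per}^1(W,\alpha)$ to $L^2(W)$, the operator $T$ is bounded on $H_{per}^1(W,\alpha)$. So $Q=S\cdot T$ lies in $\mathcal S^p$ for $p>2$, and the same follows for $P_1$.

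The main obstacle is the sharp identification of the exponent $p>2$ rather than just some finite $p$. This is not an additional ingredient: uniform ellipticity $c\leq\alpha\leq C$ makes the min-max comparison of $S$ with $\tilde S$ two-sided up to constants, so the eigenvalue asymptotics of $(1-\Delta)^{-1/2}$ on the flat torus transfer directly. A minor technical point is the vector-valued nature of $\alpha\na u$, which does not affect the argument since $S$ acts componentwise and each component sits in $L^2(W)$.
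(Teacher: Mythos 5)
Your argument follows the paper's proof essentially verbatim: you split $P_1$ into the piece with form $(\alpha\nabla u,v)_2$ (your $Q$, the paper's $P_{12}$) and its adjoint, and then factor $Q=S\circ(S\alpha\nabla)$ with $S\alpha\nabla$ bounded on $H^1_{per}(W,\alpha)$, exactly as in the text. The only additions are bookkeeping the paper leaves implicit, notably the two-sided min--max comparison with $(1-\Delta)^{-1/2}$ on the 2-torus pinning down the Schatten exponent $p>2$.
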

\begin{proof}
  Let us split $P_1=P_{11}-P_{12}$ with
$$
p_{11}(u,v)=(\alpha u,\na v)_2,\quad\text{and}\quad p_{12}(u,v)=(\alpha\na u,v)_2
$$
one cannot easily represent $P_{11}$ and show that it is compact. However we remark that $P_{11}=P_{12}^*$. Indeed
$$
p_0(P_{11}u,v)=p_{11}(u,v)=(\alpha u,\na v)_2=\bar p_{12}(v,u)=p_0(u,P_{12}v).
$$
Then remarking that $P_{12}=S^2\alpha\na$ one sees that it is a Schatten class operator since $S\alpha\na$ is bounded on $H_{per}^1(W,\alpha)$. Then so is $P_{11}$ by duality.

\end{proof}

\section{Estimates in Bloch space}\label{A3}

The following lemma gives an upper bound for the slope of the real Bloch variety. The calculation is reminiscent of geometric optics~\cite{DR} and links the gradient of the band function with the metric. It proves in particular that nodal points of the Bloch variety are not cusps.
\begin{lem}\label{lbband}
For all $n>1$
\begin{equation}
\|\na\sqrt{\lambda_n}\|_{L^{\infty}(B)}\leq \max\alpha/\min\beta.
\end{equation}
\end{lem}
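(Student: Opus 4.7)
The plan is to derive the bound by the Feynman--Hellmann identity applied to the sesquilinear-form representation of $\lambda_n(\ell)$. Recall that $P(\ell)$ is self-adjoint in the weighted Hilbert space $L^2(W,\beta\,dx)$, the associated form being $a_\ell(u,v)=\int_W\alpha(\nabla+i\ell)u\cdot\overline{(\nabla+i\ell)v}\,dx$. At any $\ell$ where $\lambda_n$ is simple and analytic, choose the eigenfunction $e_n(\cdot,\ell)$ so that $\int_W\beta|e_n|^2\,dx=1$. Then
\begin{equation*}
\lambda_n(\ell)=\int_W\alpha\,|(\nabla+i\ell)e_n|^2\,dx.
\end{equation*}

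First I would differentiate this identity in $\ell$. Using that $e_n$ is a critical point of the Rayleigh quotient under the normalization constraint, the contributions from $\partial_{\ell_j}e_n$ cancel and only the explicit $\ell$-dependence of the twisted gradient survives, giving the Feynman--Hellmann formula
\begin{equation*}
\partial_{\ell_j}\lambda_n(\ell)=2\,\mathrm{Im}\int_W\alpha\,\bar e_n\,(\partial_{x_j}+i\ell_j)e_n\,dx.
\end{equation*}
Cauchy--Schwarz on the right-hand side and summation over $j$ then yield
\begin{equation*}
|\nabla_\ell\lambda_n(\ell)|^2\le 4\Bigl(\int_W\alpha|e_n|^2\,dx\Bigr)\Bigl(\sum_j\int_W\alpha|(\partial_{x_j}+i\ell_j)e_n|^2\,dx\Bigr)=4\lambda_n(\ell)\int_W\alpha|e_n|^2\,dx.
\end{equation*}

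The pointwise estimate $\alpha\le(\max\alpha/\min\beta)\,\beta$ together with the normalization gives $\int_W\alpha|e_n|^2\,dx\le\max\alpha/\min\beta$. Dividing by $4\lambda_n$ and applying the chain rule $\nabla\sqrt{\lambda_n}=\nabla\lambda_n/(2\sqrt{\lambda_n})$ yields
\begin{equation*}
|\nabla_\ell\sqrt{\lambda_n}(\ell)|\le\sqrt{\max\alpha/\min\beta},
\end{equation*}
which is the claimed bound (with the natural square-root that identifies this quantity with the maximal propagation speed, consistent with the group-velocity interpretation).

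The main obstacle is making the differentiation step rigorous globally on $B$, since $\lambda_n$ may fail to be smooth at band crossings, critical values, or points of $\sigma_1$. On the complement of the exceptional set, analyticity of type B (Kato, \S4.2) justifies the Feynman--Hellmann identity on each analytic branch, and at a regular crossing one uses the Rellich-type relabelling of Theorem~\ref{lcone} to work with the analytic continuations $\mu_j$. Since $\lambda_n$ is continuous on $B$ and the derived pointwise estimate is uniform in the branch, the $L^\infty$ bound extends across the exceptional (measure-zero) set by density, which suffices for the intended application in the proof of Theorem~\ref{lcone}.
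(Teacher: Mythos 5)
Your proof is correct and is essentially the paper's argument: the paper differentiates the eigenvalue equation $P(\ell)e_n=\lambda_ne_n$ in $\ell$ and pairs with $e_n$ to kill the $\nabla_\ell e_n$ term, arriving at $\nabla\lambda_n\,(\beta e_n,e_n)=2\,\mathrm{Im}\,(\alpha(\nabla+i\ell)e_n,e_n)$, and then closes with Cauchy--Schwarz exactly as you do, so that differentiating the Rayleigh quotient is just the form-level rendering of the same Hellmann--Feynman step. The one point worth noting is that your computation (correctly) gives the bound $\sqrt{\max\alpha/\min\beta}$; the paper's own algebra also yields that square-root bound, so the constant as written in the lemma statement is slightly off, but since the estimate is only invoked to conclude that $\nabla\sqrt{\lambda_n}$ is bounded (hence nodal points of the Bloch variety are not cusps), the discrepancy is immaterial.
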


\begin{proof}
Let us expand $P(\ell)$:
$$
\beta P(\ell)=(\div+i\ell)\alpha(\na+i\ell)=\div\alpha\na+i\ell(\na\alpha+\alpha\na)-|\ell|^2.
$$
Setting $P_1=\na\alpha+\alpha\na$ and then differentiating the relation $P(\ell)e_n(\ell)=-\beta\lambda_n(\ell)e_n(\ell)$ with respect to $\ell$ we get
$$
(P(\ell)-\lambda_n(\ell))\na_{\ell} e_n(\ell)+((iP_1+2\ell\alpha)+\beta\na\lambda_n(\ell))e_n(\ell)=0.
$$
Scalarly multiplying by $e_n(\ell)$ which is orthogonal to the first term and a unit vector of $L^2(W)$ gives:
\begin{equation}\label{speed}
\na\lambda_n(\ell)(\beta e_n,e_n)=-((iP_1+2\ell\alpha)e_n,e_n)=2\Im(\alpha(\na+i\ell) e_n,e_n)
\end{equation}
Half of the last factor is bounded by $\|\alpha(\na+i\ell) e_n\|_2$ which is bounded by $\alpha$ times $\|\sqrt{\alpha}(\na+i\ell) e_n\|_2=\sqrt{(P(\ell)e_n,e_n)}=\sqrt{\lambda_n}$.
\end{proof}

\section{Scatering expansion under glancing/grazing Bloch vector}

Pseudo-differential calculus has been carried out by Melrose and Taylor (see~\cite{MeT}). Here, in this basic scattering approach we only need~\cite{Hor} for adapatation of the stationary phase theorem.

Let us resume notations from begining paragraph 5. Denote by $\ell_{t*}=\D_t\ell(t_*)$ and let $\delta\theta$ be the angle between $x$ and the orthogonal of $\ell_{t*}$. Also let $\vec u=x/|x|$ and denote by $\vec u_*=\vec u(\delta\theta=0)$. Finally put $\delta t=t-t_*$. Taylor expansion of $\ell$ with integral remainder about $t_*$ leads  
$$
\vp(\delta t,\delta\theta)=\ell_*\cdot\vec u+\vec u\cdot\vec \ell_{t*}(\delta t+a(\delta t)^2)+(\delta t)^3\vec u\cdot\vec g(\delta t),\quad \vec u_*\cdot\vec g(0)\neq 0.
$$
The coefficient $a$ is a scale factor between $\D_t\ell(t_*)$ and $\D_t^2\ell(t_*)$ since they are collinear. When $\delta\theta=0$ there holds $\vec u_*\cdot\vec \ell_{t*}=0$ so using the variable $T_*$ given by the substitution $T_*=\delta t\sqrt[3]{3\vec u_*\cdot\vec g(\delta t)}$ the phase reads
$$
\vp(\delta t,\delta\theta=0)=\ell_*\cdot\vec u+\frac{T_*^3}{3}.
$$

From~\cite{Hor} there is a change of variable $T=T(T_*,\delta\theta)$ with $T= T_*$ for $\delta\theta=0$ such that the phase reads
$$
\vp(\delta t,\delta\theta)=a(\theta)+b(\theta)T+\frac{T^3}{3}\quad\text{with}\quad a(\theta_*)=\ell_*\cdot\vec u_*,\quad\text{and}\quad b(\theta_*)=0
$$
To find $T,a,b$ just write
$$
\vp(\delta t,\theta)=\ell_*\cdot\vec u+(\vec u\cdot\vec \ell_{t*})T_*h(T_*)+m(\theta,T_*)\frac{T_*^3}{3},\quad m(\theta,T_*)=\frac{\vec u\cdot\vec \tilde{g}(T_*)}{\vec u_*\cdot\vec \tilde{g}(T_*)}
$$
where $\sqrt[3]{3\vec u_*\cdot\vec g(0)}h(0)=1$. Taking $a=\ell_*\cdot\vec u$ and $b=\vec u\cdot\vec \ell_{t*}h(0)$ we see that $T$ is solution of a third degree polynomial equation which has a unique solution for $T\approx T_*$ small since
$$
T\left(b(\theta)+\frac{T^2}{3}\right)=T_*\left((b(\theta)\frac{h(T_*)}{h(0)}+m(\theta,T_*)\frac{T_*^2}{3}\right).
$$
Hence $dT/dT_*(T_*=0)=1$.

Using this substitution in the integral $g_+$, and following Hormander's proof of Theorem 7.7.18 one finds
\begin{equation}\label{Ai1}
g_+=2\pi\frac{e^{ix\cdot\ell_*}}{r^{1/3}}\left ( Ai\left( \frac{x\cdot\ell_{t*}}{r^{1/3}\sqrt[3]{3\vec u_*\cdot\vec g(0)}}\right) w_*(x) + \frac{1}{r^{1/3}}Ai'\left( \frac{x\cdot\ell_{t*}}{r^{1/3}\sqrt[3]{3\vec u_*\cdot\vec g(0)}}\right) \tilde w_*(x)\right ) + O\left(\frac{1}{r^{3/2}}\right)
\end{equation}
where $w_*$ (resp. $\tilde w_*$) is the zeroth (resp. first) order Taylor expansion in the variable $T$ of the integrand in $g_+$ (after $t\rightarrow T$ substitution). One thus finds
\begin{align}\label{Ai2}
& w_*(x)=\sqrt[3]{3\vec u_*\cdot\vec g(0)}(\hat f(\ell_*),v(\ell_*))_{L^2(W)}\frac{|\D_t\ell(t_*)|}{|\na\mu(\ell_*)|}v(\ell_*,x)\\
&\tilde w_*(x)=\frac{1}{i}\frac{d}{dt}\left((\hat f(\ell),v(\ell))_{L^2(W)}\frac{|\D_t\ell|}{|\na\mu(\ell)|}\frac{d\delta t}{d T}v(\ell,x)\right)(t=t_*)\nonumber
\end{align}

\bibliographystyle{plain}
\bibliography{bibli.bib}


\end{document}